\date{November 2, 2016}
\theoremstyle{definition} 
\newtheorem{Def}{Definition}[section]
\theoremstyle{plain} 
\newtheorem{Th}[Def]{Theorem}
\newtheorem{Cor}[Def]{Corollary}
\newtheorem{Prop}[Def]{Proposition}
\theoremstyle{remark} 
\newtheorem{Rmk}[Def]{Remark}
\numberwithin{equation}{section}
\newcommand{\suchthat}{\;\;|\;\;}
\renewcommand{\leq}{\leqslant}
\renewcommand{\geq}{\geqslant}
\renewcommand{\phi}{\varphi}
\newcommand{\R}{\mathbb{R}}
\newcommand{\C}{\mathbb{C}}
\newcommand{\dbar}{\bar{\partial}}
\DeclareMathOperator{\HNF}{HNF}
\DeclareMathOperator{\HNT}{HNT}
\DeclareMathOperator{\Gr}{Gr}
\DeclareMathOperator{\rk}{rk}
\DeclareMathOperator{\Hom}{Hom}
\DeclareMathOperator{\End}{End}
\newcounter{a}
\else\stepcounter{a}\fi
\begin{document}
\thispagestyle{plain}

\begin{center}
\Large
\textsc{Stratifications on the Moduli Space of \\ Higgs Bundles}
\end{center}

\begin{center}
  02/11/2016
\end{center}

\begin{center}
  \textit{Peter B. Gothen\footnote{Partially supported by CMUP
      (UID/MAT/00144/2013) and the projects PTDC/MAT-GEO/0675/2012 and
      PTDC/MAT-GEO/2823/2014, funded by FCT (Portugal) with national
      and, where applicable, European structural funds
      through the programme FEDER, under the partnership agreement PT2020.}}\\
  \small Centro de Matem\'atica da Universidade do Porto CMUP\\
  \small Faculdade de Ci\^encias da Universidade do Porto FCUP\\
  \small Rua do Campo Alegre, s/n\\
  \small 4197-007 Porto, Portugal\\
  \small e-mail: \texttt{pbgothen@fc.up.pt}
\end{center}

\begin{center}
  \textit{Ronald A. Z\'u\~niga-Rojas\footnote{Supported by 
  Universidad de Costa Rica through CIMM (Centro de Investigaciones Matem\'aticas 
  y Metamatem\'aticas), through the Project 820-B5-202. Partially supported by 
  FCT (Portugal) through grant SFRH/BD/51174/2010.}}\\
  \small Centro de Investigaciones Matem\'aticas y Metamatem\'aticas CIMM\\
  \small Universidad de Costa Rica UCR\\
  \small San Jos\'e 11501, Costa Rica\\
  \small e-mail: \texttt{ronald.zunigarojas@ucr.ac.cr}
\end{center}

\noindent
\textbf{Abstract.}  The moduli space of Higgs bundles has two
stratifications. The Bia{\l}ynicki-Birula stratification comes from the
action of the non-zero complex numbers by multiplication on the Higgs
field, and the Shatz stratification arises from the Harder--Narasimhan
type of the vector bundle underlying a Higgs bundle. While these two
stratifications coincide in the case of rank two Higgs bundles, this is
not the case in higher rank. In this paper we analyze the relation
between the two stratifications for the moduli space of rank three Higgs bundles.

\medskip

\noindent\textbf{Keywords:} 
Moduli of Higgs Bundles, Harder--Narasimhan filtrations, Hodge
  Bundles, Vector Bundles.
\noindent\textbf{MSC class:} Primary \texttt{14H60}; Secondary
\texttt{14D07}.	

\section{Introduction}
\label{sec:intro}

Higgs bundles and their moduli were first studied by Hitchin and
Simpson and have been around for almost 30
years. They continue to be the subject of intensive investigations
with links to diverse areas of mathematics such as
non-abelian Hodge theory, integrable systems, mirror symmetry, the
Langlands programme, among others.

In this paper we focus on the moduli space of Higgs bundles on a
compact Riemann surface $X$. The topology of this moduli space has
been studied extensively. Some early calculations of
Betti numbers were carried out by Hitchin \cite{hit2} for rank 2 and
the first author \cite{got} for rank 3. Further significant progress
has been made by a number authors, see, e.g.,
\cite{hath1,hath2,markman:2002,alvarez-consul-garcia-prada-schmitt:2006,
markman:2007,hausel-rodriguez-villegas:2008,decataldo-mark-hausel-migliorini:2012,
hausel-letellier-rodriguez-villegas:2011,hausel:2013,garcia-heinloth:2013,garcia-heinloth-schmitt:2014}.
Recently
Schiffmann \cite{schiffmann:2016} has completely determined the
additive cohomology in the case of Higgs bundles with rank and degree co-prime.

On the other hand, the homotopy theory of the moduli space of Higgs
bundles has not been the subject of a lot of
interest. Hausel~\cite{hau} in his thesis studied the case of rank 2
Higgs bundles, while in \cite{bgg} some results were obtained for
general rank. The latter paper used the Bia{\l}ynicki-Birula
stratification of the Higgs bundle moduli space coming from the
$\C^*$-action given by multiplying the Higgs field by scalars. In rank
2 this stratification coincides with the Shatz stratification, which is given by the
Harder--Narasimhan type of the vector bundle underlying a Higgs
bundle. As already observed by Hitchin and exploited by Hausel and
Thaddeus \cite{hau,hath1} this makes the case of rank 2 Higgs bundles
akin to a finite dimensional version of the infinite dimensional situation
of Atiyah--Bott \cite{atbo}.

However, in general the Bia{\l}ynicki-Birula and Shatz stratifications do not coincide, and it
is therefore of interest to study their relationship. In this paper we
carry out such a study in the case of rank 3 Higgs bundles, where it
turns out that the situation is already fairly complicated. Indeed, our
main result, Theorem~\ref{stratifications}, shows that each
Shatz stratum is intersected by several different
Bia{\l}ynicki-Birula strata. Moreover, knowledge of the underlying vector
bundle of a Higgs bundle is not sufficient to determine its
Bia{\l}ynicki-Birula stratum, one also needs knowledge of the Higgs
field. However, for sufficiently unstable underlying vector bundles
the situation is simpler and the Shatz strata coincide
with Bia{\l}ynicki-Birula strata: this is described in
Theorem~\ref{thm:HN=BB}.

Our results should serve as a useful pointer to the general situation
for higher rank Higgs bundles. Moreover, in the aforementioned work
\cite{hau,hath1}, Hausel and Thaddeus consider the moduli space of
$k$-Higgs bundles (where the Higgs field is allowed to have a pole of
order $k$ at a fixed $p\in X$), and show that in the limit
$k\to\infty$ this moduli space approximates the classifying space of
the gauge group. This is used by Hausel \cite[Theorem~7.5.7]{hau} in
the rank two case to calculate certain homotopy groups of the moduli
space of Higgs bundles, using implicitly that the Bia{\l}ynicki-Birula
and Shatz stratifications coincide. One might thus hope that an
extension of our results to Higgs bundles with poles could be useful
in extending Hausel's results to higher rank.

This paper is organized as follows. In Section~\ref{sec:preliminaries}
we give some preliminaries about Higgs bundles and their moduli spaces
and we explain the Bia{\l}ynicki-Birula and Shatz stratifications of the
moduli space. Next, for completeness, in Section~\ref{sec:rank2} we present the
aforementioned result of Hausel on the equality of the two
stratifications for rank 2 Higgs bundles.  After that, in
Section~\ref{sec:rank-3-higgs}, we give some bounds on the
Harder--Narasimhan types which occur in the moduli space of rank 3
Higgs bundles. Finally, in Section~\ref{sec:limits}, we give our main
results on the relation of the two stratifications.

This paper is partly based on the Ph.D. thesis \cite{z-r2} of the
second author and an announcement of some of our results has appeared in
\cite{z-r1}.

\section{Preliminaries}
\label{sec:preliminaries}

\subsection{Higgs bundles and their moduli}
\label{sec:higgs-bundles}

Let $X$ be a closed Riemann surface of genus $g$ and let
$K=K_X=T^{*}X$ be the canonical line bundle of $X$. 

\begin{Def}
  A \textit{Higgs bundle} over $X$ is a pair $(E, \Phi)$\ where the
  \emph{underlying vector bundle} $E \to X$ is a holomorphic vector
  bundle and the \textit{Higgs field} $\Phi: E \to E\otimes K$ is a
  holomorphic endomorphism of $E$ twisted by $K$.
\end{Def}

The \emph{slope} of a vector bundle $E$ is the quotient between its
degree and its rank: $\mu(E)=\deg(E)/\rk(E)$. Recall that a vector
bundle $E$ is \emph{semistable} if $\mu(F)\leq\mu(E)$ for all non-zero
subbundles $F \subset E$, \emph{stable} if it is semistable and strict
inequality holds for all non-zero proper $F$, and \emph{polystable} if
it is the direct sum of stable bundles, all of the same slope.  
Any semistable vector bundle has a Jordan--H\"older filtration
\begin{math}
  E_0 \subset E_1 \subset \dots \subset E
\end{math}
such that the subquotients $E_j / E_{j-1}$ are stable. The isomorphism
class of the associated graded bundle $\bigoplus E_j / E_{j-1}$ is
unique, and semistable vector bundles are \emph{$S$-equivalent} if
their associated graded bundles are isomorphic. Each $S$-equivalence
class contains a unique polystable representative.  The corresponding
notions for Higgs bundles are defined in exactly the same way, except
that only $\Phi$-invariant subbundles $F \subset E$ (satisfying
$\Phi(F)\subset F\otimes K$) are considered in the stability
conditions.

The moduli space $\mathcal{M}(r,d)$ of $S$-equivalence classes of
semistable rank $r$ and degree $d$ Higgs bundles was constructed by
Nitsure \cite{nit}. The points of $\mathcal{M}(r,d)$ correspond to
isomorphism classes of polystable Higgs bundles. When $r$ and $d$ are
co-prime any semistable Higgs bundle is automatically stable and
$\mathcal{M}(r,d)$ is smooth.  

There are no stable Higgs bundles when $g \leq 1$ and the theory has
quite a different flavour (see, for example, the work of
Franco--Garc\'ia-Prada-Newstead
\cite{franco-et-al:2013,franco-et-al:2014} on Higgs bundles on
elliptic curves), and so we shall also assume that $g \geqslant 2$.

We shall need to consider the moduli space from the complex analytic
point of view. For this, fix a complex $C^\infty$ vector bundle
$\mathcal{E}$ of rank $r$ and degree $d$ on $X$. A holomorphic
structure on $\mathcal{E}$ is given by a $\dbar$-operator
\begin{math}
  \dbar_E\colon A^0(\mathcal{E}) \to {A}^{0,1}(\mathcal{E})
\end{math}
and we thus obtain a holomorphic vector bundle 
$E=(\mathcal{E},\dbar_E)$.  From this point of view, a Higgs bundle
$(E,\Phi)$ arises from a pair $(\dbar_E,\Phi)$ consisting of a
$\dbar$-operator and a Higgs field $\Phi\in
A^{1,0}(\End(\mathcal{E}))$ such that $\dbar_E\Phi=0$. The natural
symmetry group of the situation is the \emph{complex gauge group}
\begin{math}
  \mathcal{G}^\C = \{g\colon \mathcal{E}\to \mathcal{E}\suchthat \text{$g$ is a $C^\infty$ bundle isomorphism}\},
\end{math}
which acts on pairs $(\dbar_E,\Phi)$ in the standard way:
\begin{displaymath}
  g\cdot(\dbar_E,\Phi)=(g\circ\dbar_E\circ g^{-1},g\circ\Phi\circ g^ {-1}).
\end{displaymath}
The moduli space can then be viewed as the quotient\footnote{Strictly speaking one should use appropriate Sobolev 
completions as in {Atiyah and Bott \cite[Section~14]{atbo}}; see, for example, {Hausel and Thaddeus \cite[Section~8]{hath1}}
for the case of Higgs bundles.}
\begin{displaymath}
  \mathcal{M}(r,d) = \{(\dbar_E,\Phi)\suchthat \text{$\dbar_E\Phi=0$ and $(E,\Phi)$ is polystable}\} / \mathcal{G}^\C.
\end{displaymath}

\subsection{Harder--Narasimhan filtrations and the Shatz stratification}
\label{sec:harder-narasimhan}

The Harder--Narasimhan filtration of a vector bundle was introduced in
\cite[Proposition~1.3.9]{hana} and studied systematically by Shatz
\cite[Section~3]{sha}. It plays an important role in the work of
Atiyah and Bott \cite[Section~7]{atbo}. We refer the reader to these
references for details on what follows.

Let $E$ be a holomorphic vector bundle on $X$.  
A \textit{Harder-Narasimhan Filtration} of $E$, is a filtration of the form
\begin{equation}
\label{HN-filtration}
\HNF(E):\ E = E_s \supset E_{s-1} \supset \dots \supset E_1 \supset E_0 = 0
\end{equation}
which satisfies the following two properties:
\begin{enumerate}
 \item[(i)]
 $\mu(E_{j+1}/E_j) < \mu(E_j/E_{j-1})\ \text{for}\ 1\leqslant j \leqslant s-1.$
 \item[(ii)]
 $E_j/E_{j-1}\ \text{is semistable for}\ 1\leqslant j \leqslant s.$
\end{enumerate} 
For brevity, when we have a filtration $E = E_s \supset E_{s-1} \supset \dots \supset E_1 \supset E_0 = 0$ we shall sometimes write 
\begin{math}
  \bar{E}_j = E_j / E_{j-1}
\end{math}
for the subquotients. The associated graded vector bundle is
\begin{displaymath}
  \Gr(E) = \bigoplus_{j=1}^s E_j / E_{j-1} = \bigoplus_{j=1}^s \bar{E}_j.
\end{displaymath}

Any vector bundle $E$ has a unique Harder--Narasimhan filtration. The
subbundle $E_1\subset E$ is called the \emph{maximal destabilizing
  subbundle of $E$}; its rank is maximal among subbundles of $E$ of
maximal slope. Consider
the \emph{Harder--Narasimhan polygon} as the polygon in the $(r,d)$-plane with vertices 
$(\rk(E_j),\deg(E_j))$ for $j=0,\dots,s$. The slope of the line joining $(\rk(E_{j-1}),\deg(E_{j-1}))$ and 
$(\rk(E_j),\deg(E_j))$ is $\mu(\bar{E}_j)$. Condition (i) above says that the Harder--Narasimhan polygon is convex. 
Clearly this is equivalent to saying that $\mu(E_{j}) < \mu(E_{j-1})$ for $j=2,\dots,s$.

The \emph{Harder--Narasimhan type} of $E$ is the following vector in $\R^r$:
\begin{displaymath}
  \HNT(E) = \mu = (\mu(\bar{E_1}),\dots,\mu(\bar{E_1}),\dots,\mu(\bar{E}_s),\dots,\mu(\bar{E}_s))
\end{displaymath}
where $r = \rk(E)$, and the slope of each $\bar{E}_j$ is repeated $r_j = \rk(\bar{E}_j)$ times.

There is a finite decomposition of $\mathcal{M}(r,d)$ by the
Harder--Narasimhan type of the underlying vector bundle $E$ of a Higgs
bundle $(E, \Phi)$:
\begin{equation}
  \label{eq:shatz-stratification}
  \mathcal{M}(r,d) = \bigcup_{\mu}U'_{\mu}
\end{equation}
where $U'_\mu \subset \mathcal{M}(r,d)$\ is the subspace of Higgs
bundles $(E,\Phi)$ whose underlying vector bundle $E$ has
Harder--Narasimhan type $\mu$. When $(E,\Phi)$ is strictly semistable
we take its Harder--Narasimhan type to be that of the polystable
representative of its $S$-equivalence class.  As a consequence of
{Shatz \cite[Propositions~10 and 11]{sha}} the decomposition
(\ref{eq:shatz-stratification}) has nice properties and for this
reason it is known as the \textit{Shatz stratification}. Note that
there is an open dense stratum $U'_{(d/r,\dots,d/r)}$ corresponding to
Higgs bundles $(E,\Phi)$ for which the underlying vector bundle $E$ is
itself semistable (see Hitchin\cite[Proposition~6.1]{hitchin:1987b} in
the rank 2 case and \cite[Proposition~3.12]{bgg} for general
rank). Since $\Phi\in H^0(\End(E)\otimes K)\cong H^1(\End(E))^*$ (by
Serre duality), such a Higgs bundle represents a point in the cotangent
bundle of the moduli space of stable bundles
$\mathcal{N}^s(r,d)$ when $E$ is stable. Thus, if $(r,d)=1$
\begin{displaymath}
  U'_{(d/r,\dots,d/r)} = T^*\mathcal{N}(r,d)\subset \mathcal{M}(r,d).
\end{displaymath}

\subsection{The $\C^*$-action and the Bia{\l}ynicki-Birula stratification}
\label{sec:c*-action}

We review some standard facts about the $\C^*$-action on
$\mathcal{M}(r,d)$. For more details see, e.g., {Simpson
  \cite[Section~4]{sim}}, especially Lemma~(4.1.).

The holomorphic action of the multiplicative group $\mathbb{C}^{*}$\ on $\mathcal{M}(r,d)$ is defined by the multiplication: 
$$
z\cdot (E,\Phi)\mapsto (E,z\cdot \Phi).
$$

The limit $(E_0,\phi_0)=\lim_{z\to0}z\cdot (E, \Phi)$ 
exists for all $(E,\Phi)\in \mathcal{M}(r,d)$. Moreover, this limit is fixed by the
$\mathbb{C}^{*}$-action. A Higgs bundle $(E,\Phi)$ is a fixed point of the $\C^*$-action 
if and only if it is a \emph{Hodge bundle}, i.e.\ there is a decomposition
\begin{math}
  E=\bigoplus_{j=1}^p E_j
\end{math}
with respect to which the Higgs field has weight one: $\Phi\colon
E_j\to E_{j+1}\otimes K$. The \emph{type} of the Hodge bundle
$(E,\Phi)$ is $(\rk(E_1),\dots,\rk(E_p))$. 

Let $\{F_{\lambda}\}$\ be the irreducible components of the fixed
point locus of $\mathbb{C}^{*}$ on $\mathcal{M}(r,d)$.  Let
$$
U_{\lambda}^+ := \{ (E, \Phi)\in
\mathcal{M}\suchthat\lim_{z\to0}z\cdot (E, \Phi) \in F_{\lambda} \}.
$$
Then we have the \emph{Bia{\l}ynicki-Birula stratification} (cf.\
\cite[Theorem~4.1]{b-b}) of $\mathcal{M}(r,d)$:
$$\mathcal{M} = \bigcup_{\lambda}U_{\lambda}^+.$$
Note that there is a
distinguished component 
$$F_{\min}=\mathcal{N}(r,d)$$ 
of the fixed locus 
corresponding to semistable Higgs bundles with zero Higgs field and
that we have a corresponding Bia{\l}ynicki-Birula stratum $U_{\min}^+$.
Let $(E,\Phi)$ be a semistable Higgs bundle. When the underlying vector
bundle $E$ is itself semistable, clearly 
\begin{math}
  \lim_{z\to 0}z\cdot(E,\Phi)=(E,0).
\end{math}
Conversely, if $\lim_{z\to 0}z\cdot(E,\Phi) =
(E,0)\in\mathcal{M}(r,d)$, then $(E,0)$ is a semistable Higgs bundle
and hence $E$ is a semistable vector bundle. Thus we have the
following result, valid for any rank.

\begin{Prop}
  \label{prop:limit-stable}
  Let $(E,\Phi)\in\mathcal{M}(r,d)$. Then $\lim_{z\to
    0}z\cdot(E,\Phi)=(E,0)$ if and only if $E$ is semistable. \qed
\end{Prop}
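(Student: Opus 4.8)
The plan is to reduce both directions to a single elementary observation about the vanishing of the Higgs field. Since the $\C^*$-action fixes the underlying holomorphic bundle and merely rescales the Higgs field, $z\cdot(E,\Phi)=(E,z\Phi)$, rescaling by $z\in\C^*$ does not alter the set of $\Phi$-invariant subbundles; consequently $(E,z\Phi)$ is semistable for every $z\in\C^*$ exactly because $(E,\Phi)$ is, and the whole orbit stays in the semistable locus. The key point I would record first is that when $\Phi=0$ the invariance condition $\Phi(F)\subset F\otimes K$ is vacuous, so \emph{every} subbundle $F\subset E$ enters the stability test for $(E,0)$. Hence $(E,0)$ is a semistable Higgs bundle if and only if $\mu(F)\le\mu(E)$ for all subbundles $F\subset E$, that is, if and only if $E$ is a semistable vector bundle.

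For the forward implication I would assume $E$ semistable. By the observation above, $(E,0)$ is then a semistable Higgs bundle and therefore represents a genuine point of $\mathcal{M}(r,d)$. As $z\to0$ the pairs $(\dbar_E,z\Phi)$ converge to $(\dbar_E,0)$ in the space of pairs, and because the limiting pair already lies in the semistable locus this pointwise limit coincides with the limit taken in the quotient; thus $\lim_{z\to0}z\cdot(E,\Phi)=(E,0)$.

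For the converse I would assume $\lim_{z\to0}z\cdot(E,\Phi)=(E,0)$. The limit is by construction a point of $\mathcal{M}(r,d)$, so $(E,0)$ is a semistable Higgs bundle; applying the recorded observation in the other direction forces $\mu(F)\le\mu(E)$ for all subbundles $F\subset E$, i.e.\ $E$ is semistable as a vector bundle.

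The only genuine obstacle I foresee is in the forward direction, namely justifying that the limit computed in the quotient $\mathcal{M}(r,d)$ really agrees with the naive pointwise limit $(E,z\Phi)\to(E,0)$ rather than jumping to a different $S$-equivalence class. This is exactly the content of the earlier assertion that the limit exists and is a fixed point; the extra input one needs is that $(E,0)$ does not fall out of the semistable locus, which is precisely the hypothesis that $E$ is semistable, so no separate convergence estimate is required.
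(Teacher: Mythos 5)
Your proof is correct and follows essentially the same route as the paper: the paper likewise observes that with $\Phi=0$ every subbundle is $\Phi$-invariant (so Higgs semistability of $(E,0)$ equals vector bundle semistability of $E$), deduces the converse from the fact that the limit point lies in $\mathcal{M}(r,d)$, and treats the forward direction as the naive configuration-space limit $(\dbar_E,z\Phi)\to(\dbar_E,0)$ descending to the moduli space because the limiting pair is semistable. Your write-up is, if anything, slightly more explicit than the paper's one-line "clearly" about why the pointwise limit agrees with the limit in the quotient.
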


In view of this result the following proposition is now immediate.

\begin{Prop}
  The following subspaces of the moduli space $\mathcal{M}(r,d)$
  coincide:
  \begin{displaymath}
    U_{(d/r,\dots,d/r)}' = U_{\min}^+.
  \end{displaymath}
  \qed
\end{Prop}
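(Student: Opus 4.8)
The plan is to prove the two inclusions separately, in each case reducing to Proposition~\ref{prop:limit-stable}. For the inclusion $U'_{(d/r,\dots,d/r)}\subseteq U^+_{\min}$, suppose $(E,\Phi)\in U'_{(d/r,\dots,d/r)}$, so that by definition the underlying bundle $E$ is semistable. Then Proposition~\ref{prop:limit-stable} gives $\lim_{z\to0}z\cdot(E,\Phi)=(E,0)$, and since $E$ is semistable this limit is a semistable Higgs bundle with vanishing Higgs field, i.e.\ a point of $F_{\min}=\mathcal{N}(r,d)$. Hence $(E,\Phi)\in U^+_{\min}$, and this direction is indeed immediate.

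For the reverse inclusion $U^+_{\min}\subseteq U'_{(d/r,\dots,d/r)}$, which carries the real content, the key observation is that the $\C^*$-action fixes the underlying bundle: for $z\neq0$ one has $z\cdot(E,\Phi)=(E,z\Phi)$, so the whole punctured orbit lies in the single Shatz stratum $U'_{\HNT(E)}$, and its limit $(E_0,\Phi_0)=\lim_{z\to0}z\cdot(E,\Phi)$ therefore lies in $\overline{U'_{\HNT(E)}}$. I would then invoke the defining property of the Shatz stratification (Shatz \cite[Propositions~10 and 11]{sha}): the Harder--Narasimhan type is upper semicontinuous under specialization, so that the Harder--Narasimhan polygon of $E_0$ lies on or above that of $E$, and $\overline{U'_\mu}$ meets only strata $U'_{\mu'}$ that are at least as unstable as $\mu$. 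Now if $(E,\Phi)\in U^+_{\min}$ then $(E_0,\Phi_0)\in F_{\min}=\mathcal{N}(r,d)$, so $E_0$ is semistable and its Harder--Narasimhan polygon is the minimal (straight-line) one. Since the polygon of $E_0$ lies on or above that of $E$, the polygon of $E$ must also be the straight line, i.e.\ $E$ is semistable, whence $(E,\Phi)\in U'_{(d/r,\dots,d/r)}$.

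The step I expect to be the main obstacle is the reverse inclusion, and specifically the need to exclude the scenario in which $E$ is unstable yet the limit jumps to a genuinely different but semistable underlying bundle $E_0\neq E$ with $\Phi_0=0$ --- a possibility that Proposition~\ref{prop:limit-stable} by itself does not rule out. Upper semicontinuity of the Harder--Narasimhan type is precisely what forbids this, since a specialization of an unstable bundle cannot become semistable. The only remaining care concerns the bookkeeping of the partial order on Harder--Narasimhan types: one must confirm that the semistable type is the unique minimum and that the closure relations among Shatz strata point in the direction of increasing instability, so that the limit cannot be strictly less unstable than $E$.
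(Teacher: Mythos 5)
Your proof is correct, and it actually supplies an argument that the paper itself elides. The paper deduces this proposition as ``immediate'' from Proposition~\ref{prop:limit-stable}, which does give the inclusion $U'_{(d/r,\dots,d/r)}\subseteq U^+_{\min}$ directly; but that proposition only characterizes when the limit equals $(E,0)$ with the \emph{same} underlying bundle, so for the reverse inclusion one must exclude exactly the scenario you isolate: an unstable $E$ whose limit is $(E_0,0)$ for a \emph{different}, semistable, bundle $E_0$. Your appeal to Shatz semicontinuity --- the punctured orbit lies in a single Shatz stratum, the Harder--Narasimhan polygon can only rise under specialization, and the semistable polygon is the unique minimal one --- is the standard, rank-independent way to close this gap, and it rests on the same facts the paper itself invokes when it cites \cite[Propositions~10 and~11]{sha} for the good properties of the stratification (in rank $3$ the claim is also confirmed a posteriori by Theorem~\ref{stratifications}, where every unstable $E$ is shown to flow to a Hodge bundle with non-vanishing Higgs field, hence not into $F_{\min}$). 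Two minor points would make your argument fully airtight: first, to apply Shatz's results one needs an actual holomorphic family of semistable Higgs bundles over a disc realizing the limit, which exists by Simpson's construction of the limit \cite[Lemma~4.1]{sim}, or equivalently by the explicit families $g(z)\cdot(\dbar_E,z\cdot\Phi)$ used in Section~\ref{sec:limits}, which extend across $z=0$; second, the special fibre of such a family is a priori only $S$-equivalent to the polystable limit $(E_0,0)$, so one should observe that any semistable Higgs bundle $S$-equivalent to a point of $F_{\min}$ has all its Higgs--Jordan--H\"older subquotients equal to stable bundles of slope $\mu$ with zero Higgs field, hence has semistable underlying vector bundle, after which your polygon comparison applies verbatim.
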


\section{The rank 2 case}
\label{sec:rank2}

In this section we recall, for completeness, a theorem of Hausel,
which says that in rank $2$ the Shatz and Bia{\l}ynicki-Birula
stratifications coincide.

Let $(E,\Phi)$ be a semistable rank $2$ Higgs bundle
corresponding to a fixed point of the $\C^*$-action on
$\mathcal{M}(2,d)$. In view of the results explained in
Section~\ref{sec:c*-action}, either $\Phi=0$ or $(E,\Phi)$ is of the
form
\begin{equation}
  \label{eq:1}
  (E,\Phi) = (E_1 \oplus E_2, \left( \begin{array}{c c}
                        0 & 0\\ \varphi & 0
                       \end{array}\right)).
\end{equation}
Let $d_1=\deg(E_1)$, then $\deg(E_2)=d-d_1$. Semistability of
$(E,\Phi)$ immediately shows that $d_1$ must satisfy the bounds
\begin{displaymath}
  d \leq 2d_1 \leq d+ 2g-2.
\end{displaymath}
If $d<2d_1$ then $\phi\neq 0$, and if $d=2d_1$ then such a Higgs
bundle is $S$-equivalent to $(E,0)$. Thus, the components of the fixed
locus are $F_{\min}=\mathcal{N}(2,d)$ and, for each $d_1$ with $d < 2d_1 \leq
d+ 2g-2$, a component $F_{d_1}$ consisting of $(E,\Phi)$ of the form
(\ref{eq:1}). (It is easy to see that $F_{d_1}$ is indeed connected, cf.\
Hitchin \cite[Sec.~7]{hit2}.)

The methods employed in the present paper readily give the following
result (cf.\ Remark~\ref{rem:rank2-proof}).

\begin{Prop}
  \label{prop:limit-rank2}
  Let $(E,\Phi)\in \mathcal{M}(2,d)$ be a rank $2$ Higgs bundle such
  that $E$ is an unstable vector bundle with maximal destabilizing
  line bundle $E_1\subset E$. Then the limit $(E_0,\Phi_0)=
  \lim_{z\to0}(E,z\cdot \Phi)$ is
  \begin{displaymath}
       (E_0,\Phi_0) = \Big(E_1\oplus E/E_1,
      \left( \begin{array}{c c}
                            0     & 0\\ 
                        \phi_{21} & 0
                       \end{array}
      \right) \Big),
  \end{displaymath}
  where $\varphi_{21}$ is induced from $\Phi$. The associated graded
  vector bundle is $\Gr(E_0) = \Gr(E)$. \qed
\end{Prop}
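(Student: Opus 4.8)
The plan is to compute the limit at the level of the underlying configurations $(\dbar_E,\Phi)$ and then transfer the result to the moduli space. First I would fix a $C^\infty$ splitting $\mathcal{E}=\mathcal{E}_1\oplus\mathcal{E}_2$, where $\mathcal{E}_1$ carries the holomorphic structure of the maximal destabilizing line bundle $E_1$ and $\mathcal{E}_2$ that of the quotient $E/E_1$. Because $E_1$ is a \emph{holomorphic} subbundle, in this splitting the operator $\dbar_E$ is upper triangular, and I would record the matrix of the Higgs field:
\begin{displaymath}
  \dbar_E = \begin{pmatrix} \dbar_1 & \beta \\ 0 & \dbar_2 \end{pmatrix}, \qquad
  \Phi = \begin{pmatrix} \phi_{11} & \phi_{12} \\ \phi_{21} & \phi_{22} \end{pmatrix},
\end{displaymath}
where $\beta$ is the second fundamental form and the $\phi_{ij}$ are the components of $\Phi$.

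Next comes the key step: I would apply the one-parameter family of complex gauge transformations $g_z=\mathrm{diag}(z,1)\in\mathcal{G}^\C$ to the pair $(\dbar_E,z\Phi)$ and let $z\to 0$. A direct computation gives
\begin{displaymath}
  g_z\cdot(\dbar_E,z\Phi) = \left( \begin{pmatrix} \dbar_1 & z\beta \\ 0 & \dbar_2 \end{pmatrix},\ \begin{pmatrix} z\phi_{11} & z^2\phi_{12} \\ \phi_{21} & z\phi_{22} \end{pmatrix} \right) \xrightarrow[z\to0]{} \left( \dbar_1\oplus\dbar_2,\ \begin{pmatrix} 0 & 0 \\ \phi_{21} & 0 \end{pmatrix} \right).
\end{displaymath}
Since $g_z\in\mathcal{G}^\C$ for every $z\neq0$, the configurations $g_z\cdot(\dbar_E,z\Phi)$ and $(\dbar_E,z\Phi)$ define the same point of $\mathcal{M}(2,d)$, so the moduli-space limit is represented by the limiting configuration on the right, namely $(E_1\oplus E/E_1,\Phi_0)$ with $\Phi_0$ as stated. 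The component $\phi_{21}$ is the composite $E_1\into E\xra{\Phi}E\otimes K\to(E/E_1)\otimes K$, which is holomorphic and hence induced from $\Phi$ as claimed; moreover it is nonzero, for otherwise $E_1$ would be a $\Phi$-invariant subbundle with $\mu(E_1)>\mu(E)$, contradicting semistability of $(E,\Phi)$ (equivalently, by Proposition~\ref{prop:limit-stable} the limit cannot be $(E,0)$ since $E$ is unstable).

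Finally, for the assertion about the associated graded I would observe that the underlying bundle of $E_0$ is $E_1\oplus E/E_1$ with $\mu(E_1)>\mu(E/E_1)$, so its Harder--Narasimhan filtration is $0\subset E_1\subset E_0$ and hence $\Gr(E_0)=E_1\oplus E/E_1=\Gr(E)$; one also verifies that $(E_0,\Phi_0)$ is semistable, the only subbundle that could destabilize it being $E_1$, which is not $\Phi_0$-invariant since $\phi_{21}\neq0$. I expect the main obstacle to be the transfer from the configuration-level limit to the genuine limit in the quotient $\mathcal{M}(2,d)$: one must argue that convergence of the gauge-transformed pairs (even though $g_z$ itself degenerates as $z\to0$) forces convergence of the corresponding points of the moduli space, which relies on the limiting configuration being semistable so that the quotient map is well behaved there. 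The explicit matrix computation is routine; the nonvanishing of $\phi_{21}$ and this continuity argument are the points requiring care.
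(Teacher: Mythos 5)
Your proof is correct and takes essentially the same approach as the paper's: the paper proves this proposition via the ``Case A'' computation in the proof of Theorem~\ref{stratifications} (cf.\ Remark~\ref{rem:rank2-proof}), using the same smooth splitting $\mathcal{E}_1\oplus\mathcal{E}_2$, an equivalent one-parameter family of constant diagonal gauge transformations, the same limiting configuration, and a stability check of $(E_0,\Phi_0)$ to conclude that it represents the limit in $\mathcal{M}(2,d)$. Your choices of $g_z=\mathrm{diag}(z,1)$ (versus the paper's $\mathrm{diag}(1,z)$, which acts identically under conjugation) and of verifying stability via ``the only destabilizing subbundle $E_1$ is not $\Phi_0$-invariant'' (versus ``the only $\Phi_0$-invariant subbundle $E/E_1$ has slope $<\mu$'') are only cosmetic differences.
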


Combining Proposition~\ref{prop:limit-rank2} with
Proposition~\ref{prop:limit-stable} immediately gives the following
corollary.

\begin{Cor}[{Hausel \cite[Proposition~4.3.2]{hau}}]\label{[Hau1](4.3.2)}
  The Shatz stratification of $\mathcal{M}(2,d)$ coincides with the
  Bia{\l}ynicki-Birula stratification. More precisely, $U_{(d/2,d/2)}' =
  U_{\min}^+ = T^*\mathcal{N}(2,d)$ (where the last identity holds for
  $d$ odd), and $U'_{d_1,d-d_1} = U_{d_1}^+$
  for each $d_1$ satisfying $d < 2d_1 \leq d+ 2g-2$. \qed
\end{Cor}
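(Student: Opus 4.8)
The plan is to establish the two asserted equalities separately, feeding the two preceding propositions into a bookkeeping argument that matches the indexing of the Shatz strata (by Harder--Narasimhan type) with that of the Bia{\l}ynicki-Birula strata (by fixed-point component of the limit).

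First I would dispose of the semistable stratum. A Higgs bundle $(E,\Phi)$ lies in $U'_{(d/2,d/2)}$ precisely when $E$ is a semistable vector bundle, and by Proposition~\ref{prop:limit-stable} this holds if and only if $\lim_{z\to0}z\cdot(E,\Phi)=(E,0)\in F_{\min}=\mathcal{N}(2,d)$, that is, if and only if $(E,\Phi)\in U_{\min}^+$. This already gives $U'_{(d/2,d/2)}=U_{\min}^+$, and the identification with $T^*\mathcal{N}(2,d)$ for $d$ odd is exactly the observation recorded in Section~\ref{sec:harder-narasimhan}.

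For the unstable strata I would fix $d_1$ with $d<2d_1\leq d+2g-2$ and prove both inclusions. For $U'_{(d_1,d-d_1)}\subseteq U_{d_1}^+$, take $(E,\Phi)$ with $E$ unstable of Harder--Narasimhan type $(d_1,d-d_1)$, so its maximal destabilizing line bundle $E_1$ has degree $d_1$. Proposition~\ref{prop:limit-rank2} then computes the limit as the Hodge bundle $\bigl(E_1\oplus E/E_1,\Phi_0\bigr)$ of the form (\ref{eq:1}), whose first summand has degree $d_1$; hence the limit lies in the component $F_{d_1}$ and $(E,\Phi)\in U_{d_1}^+$. Conversely, suppose $(E,\Phi)\in U_{d_1}^+$. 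Since $d<2d_1$ we have $F_{d_1}\neq F_{\min}$, so by Proposition~\ref{prop:limit-stable} the bundle $E$ cannot be semistable; applying Proposition~\ref{prop:limit-rank2}, the limit is $\bigl(E_1\oplus E/E_1,\Phi_0\bigr)$ with $E_1$ the maximal destabilizing line bundle, and membership in $F_{d_1}$ forces $\deg(E_1)=d_1$. Thus $E$ has Harder--Narasimhan type $(d_1,d-d_1)$ and $(E,\Phi)\in U'_{(d_1,d-d_1)}$.

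The argument becomes essentially automatic once the two propositions are in hand; the only point requiring care---and the step I would treat as the crux---is the matching of the two labelings. Specifically, one must confirm that the integer $d_1$ indexing the fixed-point component $F_{d_1}$ is literally the degree of the destabilizing subbundle produced by Proposition~\ref{prop:limit-rank2}, and that both families of strata are indexed by the same set $\{\,d_1 : d<2d_1\leq d+2g-2\,\}$ together with the distinguished semistable stratum. Both facts follow from the description of the fixed locus given before (\ref{eq:1}) and from the elementary fact that every rank $2$ bundle is either semistable or has a unique maximal destabilizing line bundle; no work beyond this bookkeeping is needed.
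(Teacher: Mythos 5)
Your proposal is correct and takes essentially the same approach as the paper: the paper's entire proof consists of the remark that combining Proposition~\ref{prop:limit-rank2} with Proposition~\ref{prop:limit-stable} immediately gives the corollary, and your argument is exactly that combination with the bookkeeping (matching the index $d_1$ of the fixed-point components $F_{d_1}$ with the degree of the maximal destabilizing line bundle, and using that the two stratifications are partitions) spelled out explicitly.
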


\section{Bounds on Harder--Narasimhan types in rank 3}
\label{sec:rank-3-higgs}

Let $(E,\Phi)$ be a rank $3$ Higgs bundle. Let $(\mu_1,\mu_2,\mu_3)$
be the Harder--Narasimhan type of $E$, so that
$\mu_1\geq\mu_2\geq\mu_3$ and $\mu_1+\mu_2+\mu_3=3\mu$, where
$\mu=\mu(E)$. We can write the Harder--Narasimhan filtration of the
vector bundle $E$ as follows:
\begin{displaymath}
  \HNF(E):\ 0 = E_0 \subset E_1 \subset E_2 \subset E_3 = E,
\end{displaymath}
where we have made the convention that $E_i=E_j$ if
$\mu_i=\mu_j$. Thus, for example, if $\mu_1=\mu_2>\mu_3$ then the
Harder--Narasimhan filtration is
\begin{displaymath}
  \HNF(E):\ 0 = E_0 \subset E_1 = E_2 \subset E_3 = E
\end{displaymath}
and $\rk(E_1)=\rk(E_2)=2$. Similarly, if $\mu_1>\mu_2=\mu_3$ then
$\rk(E_1)=1$ and $\rk(E_2)=3$.

We shall next introduce some notation which will be used throughout
the remainder of the paper.

Let $\phi_{21}\colon
  {E}_1 \to E/E_1 \otimes K$  be the map induced by $\Phi$ and
let
\begin{equation}
I\subset E/E_1\label{eq:def-I}
\end{equation}
be the subbundle defined by saturating the
subsheaf $\phi_{21}(E_1)\otimes K^{-1} \subset E/E_1$.
Similarly, let $\phi_{32}\colon
  E_2 \to E/E_2 \otimes K$ be the map induced by $\Phi$ and
let
\begin{equation}
N =\ker(\phi_{32}) \subset E_2\label{eq:def-N}
\end{equation}
viewed as a subbundle.

\begin{Rmk}
  \label{rmk:phi-non-zero}
  Let $(E,\Phi)$ be a stable Higgs bundle such that $E$ is an unstable
  vector bundle of Harder--Narasimhan type $(\mu_1,\mu_2,\mu_3)$. Then
  $E_1\subset E$ is destabilizing and hence, by 
  stability of $(E,\Phi)$, we have $\varphi_{21}\neq 0$. Similarly 
  $E_2\subset E$ is destabilizing and so $\phi_{32}\neq 0$ (unless
  $\mu_2=\mu_3 \iff E_2=E$).
\end{Rmk}

\begin{Prop}
  \label{prop:mu-inequalities}
  Let $(E,\Phi)$ be a semistable rank $3$ Higgs bundle of
  Harder--Narasimhan type $(\mu_1,\mu_2,\mu_3)$. Then
  \begin{align}
    0 &\leq \mu_1-\mu_2 \leq 2g-2, \label{eq:mu1-mu2}\\
    0 &\leq \mu_2-\mu_3 \leq 2g-2. \label{eq:mu2-mu3}
  \end{align}
\end{Prop}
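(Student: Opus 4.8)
The plan is to exploit semistability of the Higgs bundle $(E,\Phi)$ together with the fact that the Harder--Narasimhan filtration of $E$ must interact with the Higgs field in a controlled way. The lower bounds $0 \leq \mu_1-\mu_2$ and $0 \leq \mu_2-\mu_3$ are immediate from the definition of the Harder--Narasimhan type, since by convention $\mu_1 \geq \mu_2 \geq \mu_3$. So the real content is the two upper bounds $\mu_1-\mu_2 \leq 2g-2$ and $\mu_2-\mu_3 \leq 2g-2$. These should be thought of as saying that the ``jumps'' in slope cannot be too large, and the obstruction to large jumps is precisely that the Higgs field would otherwise have to vanish on a destabilizing subbundle, contradicting semistability.

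First I would observe that the degree $2g-2 = \deg(K)$ is exactly the degree of the canonical bundle, which strongly suggests that the bound comes from a nonzero section of a line bundle (or sheaf homomorphism) twisted by $K$. Concretely, consider the maps $\phi_{21}\colon E_1 \to (E/E_1)\otimes K$ and $\phi_{32}\colon E_2 \to (E/E_2)\otimes K$ induced by $\Phi$, introduced in the excerpt. The key point is that $E_1$ is $\Phi$-invariant if and only if $\phi_{21}=0$, and since $E_1$ is the maximal destabilizing subbundle of $E$ (hence $\mu(E_1) > \mu(E)$), semistability of $(E,\Phi)$ forces $\phi_{21}\neq 0$ whenever $\mu_1 > \mu_2$ (cf.\ Remark~\ref{rmk:phi-non-zero}). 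Similarly $\phi_{32}\neq 0$ whenever $\mu_2 > \mu_3$. The plan is then to extract a numerical inequality from the existence of these nonzero maps.

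Next I would turn the nonvanishing of $\phi_{21}$ into the bound $\mu_1-\mu_2 \leq 2g-2$. The idea is to restrict attention to the maximal destabilizing piece. Let $L \subset E_1$ be a subbundle realizing the maximal slope (e.g.\ a saturated subsheaf of $E_1$ of slope $\mu_1$, or the maximal destabilizing subbundle of $E_1$ itself). A nonzero map $\phi_{21}$ yields, after composing with an appropriate projection, a nonzero sheaf homomorphism from a piece of slope $\mu_1$ into a piece of $(E/E_1)\otimes K$ whose relevant subquotient has slope $\mu_2 + (2g-2)$. Since a nonzero homomorphism between semistable bundles forces the slope of the source to be at most the slope of the target, one gets $\mu_1 \leq \mu_2 + (2g-2)$, which is the desired bound \eqref{eq:mu1-mu2}. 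The argument for \eqref{eq:mu2-mu3} is symmetric: using $\phi_{32}\neq 0$, a nonzero map from a slope-$\mu_2$ piece of $E_2$ into a slope-$(\mu_3+2g-2)$ piece of $(E/E_2)\otimes K$ gives $\mu_2 \leq \mu_3 + (2g-2)$.

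The main obstacle I anticipate is making the ``nonzero homomorphism implies slope inequality'' step completely rigorous at the level of the relevant subquotients, since $E/E_1$ and $E/E_2$ need not themselves be semistable, and one must identify the correct semistable pieces to compare. The clean way to handle this is to compose $\phi_{21}$ with the projection onto the top (minimal-slope) graded piece of the Harder--Narasimhan filtration of $E/E_1$, and to restrict to the bottom (maximal-slope) graded piece of the source; the subquotients $\bar{E}_j = E_j/E_{j-1}$ are semistable by property (ii) of the Harder--Narasimhan filtration, so the standard fact that a nonzero map between semistable bundles satisfies $\mu(\text{source}) \leq \mu(\text{target})$ applies. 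One must also check that these composites are genuinely nonzero, which follows from the maximality properties of the filtration together with the nonvanishing guaranteed by semistability of $(E,\Phi)$. Once these semistable pieces are correctly pinned down, the slope inequalities and hence both upper bounds follow directly.
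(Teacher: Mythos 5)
Your high-level strategy is the same as the paper's: lower bounds from the ordering $\mu_1\geq\mu_2\geq\mu_3$, nonvanishing of $\phi_{21}$ and $\phi_{32}$ from semistability of $(E,\Phi)$ (Remark~\ref{rmk:phi-non-zero}), and then a slope inequality extracted from these nonzero $K$-twisted maps. But the way you pin down the key step contains a genuine error. You propose to compose $\phi_{21}$ with the projection onto the \emph{minimal-slope} graded piece of $E/E_1$ (and, for \eqref{eq:mu2-mu3}, to restrict $\phi_{32}$ to the \emph{maximal-slope} graded piece of $E_2$), and you claim these composites are nonzero by ``maximality properties of the filtration together with the nonvanishing guaranteed by semistability.'' That claim is false: nothing prevents $\Phi(E_1)\subset E_2\otimes K$. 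Concretely, take a stable Hodge bundle of type $(1,1,1)$, $E=L_1\oplus L_2\oplus L_3$ with $\deg L_1>\deg L_2>\deg L_3$ and $\Phi$ mapping $L_1\to L_2\otimes K$ and $L_2\to L_3\otimes K$ (these are exactly the fixed points occurring in Cases~(1.4), (2.4) and (3.1) of Theorem~\ref{stratifications}). Its Harder--Narasimhan filtration is $0\subset L_1\subset L_1\oplus L_2\subset E$, the projection of $\phi_{21}$ to $(E/E_2)\otimes K\cong L_3\otimes K$ is identically zero, and $\phi_{32}|_{E_1}$ is identically zero as well. So the maps on which your derivation rests need not exist as nonzero maps, and the argument breaks at precisely the point you flagged as the main obstacle.

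The repair is standard, and it is what the paper does in concrete rank-$3$ form: let the Higgs field itself select the semistable subquotient, rather than fixing an extreme graded piece in advance. For \eqref{eq:mu1-mu2} one saturates the image: the line bundle $I\subset E/E_1$ of \eqref{eq:def-I} receives a nonzero map $E_1\to I\otimes K$ \emph{by construction}, giving $\mu_1\leq\mu(I)+2g-2$, while $\mu(I)\leq\mu_{\max}(E/E_1)=\mu_2$ simply because $I$ is a subsheaf of $E/E_1$. For \eqref{eq:mu2-mu3} one passes to the kernel: with $N=\ker\phi_{32}$ as in \eqref{eq:def-N}, the induced map $E_2/N\to(E/E_2)\otimes K$ is nonzero by construction, giving $\mu(E_2/N)\leq\mu_3+2g-2$, while $\mu(E_2/N)=\mu_1+\mu_2-\mu(N)\geq\mu_2$ because $\mu(N)\leq\mu_{\max}(E_2)=\mu_1$. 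In lemma form: a nonzero map $f\colon A\to B$ with $A$ semistable gives $\mu(A)\leq\mu_{\max}(B)$, and with $B$ semistable gives $\mu_{\min}(A)\leq\mu(B)$; in both statements the semistable subquotient witnessing the inequality is determined by $f$ (saturated image, respectively kernel), not chosen beforehand. Alternatively, your argument can be saved by a case analysis: either your chosen projection is nonzero (which yields an even stronger bound, since $\mu_3\leq\mu_2$), or it vanishes, in which case the image lies inside the maximal destabilizing subbundle and the required bound follows from the map into that piece. Some such saturation or case-splitting step is indispensable, and once it is inserted your proof coincides with the paper's.
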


\begin{proof}
  The fact that the differences $\mu_{i+1}-\mu_i$ are non-negative is
  just the convexity of the Harder--Narasimhan polygon.

  If $E$ is semistable the result is clear, so we may
  assume that this is not the case.

  If $\mu_1>\mu_2$ then $\rk(E_1)=1$, and $I\subset E/E_1$ is a
  line bundle, since $\phi_{21}\neq 0$ by Remark~\ref{rmk:phi-non-zero}. 
  It follows that we have a non-zero map of line bundles
  \begin{math}
    E_1 \to I \otimes K
  \end{math}
  and so
  \begin{displaymath}
    \mu(I) + 2g-2 \geq \mu(E_1)=\mu_1.
  \end{displaymath}
  Also, since $E_2/E_1\subset E/E_1$ is the maximal destabilizing
  subbundle, we have
  that
  \begin{displaymath}
    \mu(I) \leq \mu(E_2/E_1) = \mu_2
  \end{displaymath}
  (note that this inequality also holds if $\mu_2=\mu_3$).
  Combining these two inequalities proves (\ref{eq:mu1-mu2}).

  If $\mu_2>\mu_3$ then $\rk(E_2)=2$, and $N \subset E_2$ is a line
  bundle, since $\phi_{32}\neq 0$ by Remark~\ref{rmk:phi-non-zero}.  
  It follows that we have a non-zero map of line bundles
  \begin{math}
    E_2/N \to E/E_2 \otimes K
  \end{math}
  and so
  \begin{align*}
    \mu(E/E_2) + 2g-2 &\geq \mu(E_2/N) \\
    \iff \mu_3 +2g-2 &\geq \deg(E_2)-\mu(N)=\mu_1+\mu_2-\mu(N).
  \end{align*}
  Also, since $E_1\subset E_2$ is maximal destabilizing, we have
  that
  \begin{displaymath}
    \mu(N) \leq \mu(E_1) = \mu_1
  \end{displaymath}
  (note that this inequality also holds if $\mu_1=\mu_2$).
  Combining these two inequalities proves (\ref{eq:mu2-mu3}).
\end{proof}

Note that the proof of the preceding Proposition gives the following
bounds on the slopes of the bundles $I$ and $N$.

\begin{Prop}
\label{prop:mu-I-N}  Let $(E,\Phi)$ be a semistable rank $3$ Higgs bundle of
  Harder--Narasimhan type $(\mu_1,\mu_2,\mu_3)$ and define $I\subset
  E/E_1$ and $N \subset E_2$ as above.
  \begin{itemize}
  \item[(1)] If $\mu_1>\mu_2$ then $I\subset E/E_1$ is a line
    subbundle of a rank $2$ bundle and $\mu_1-(2g-2) \leq \mu(I) \leq
    \mu_2$.
  \item[(2)] If $\mu_2>\mu_3$ then $N\subset E_2$ is a line subbundle
    of a rank $2$ bundle and $\mu_1+\mu_2-\mu_3-(2g-2) \leq \mu(N)
    \leq \mu_1$.
  \end{itemize}
\qed
\end{Prop}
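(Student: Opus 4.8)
The plan is to observe that both bounds are already contained, as intermediate steps, in the proof of Proposition~\ref{prop:mu-inequalities}: there each of \eqref{eq:mu1-mu2} and \eqref{eq:mu2-mu3} was obtained by bounding $\mu(I)$ (respectively $\mu(N)$) from below and from above and then combining the two. Here I would simply record the two one-sided bounds separately, rather than eliminating $\mu(I)$ and $\mu(N)$. Since the two assertions are entirely parallel, I would treat them in turn. The one elementary fact doing all the work is that a nonzero morphism of line bundles cannot decrease degree.

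For (1), assume $\mu_1>\mu_2$, so that $\rk(E_1)=1$ and hence $E/E_1$ has rank $2$. By the reasoning of Remark~\ref{rmk:phi-non-zero} (which applies here because semistability of $(E,\Phi)$ forbids the destabilizing subbundle $E_1$, of slope $\mu_1>\mu$, from being $\Phi$-invariant) we have $\phi_{21}\neq 0$, so $I$, the saturation of $\phi_{21}(E_1)\otimes K^{-1}$, is a genuine line subbundle of the rank $2$ bundle $E/E_1$. The inclusion $\phi_{21}(E_1)\subset I\otimes K$ gives a nonzero map of line bundles $E_1\to I\otimes K$, whence $\mu_1=\mu(E_1)\leq\mu(I)+(2g-2)$, which is the lower bound. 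For the upper bound I would invoke that $E_2/E_1$ is the maximal destabilizing subbundle of $E/E_1$, so every line subbundle of $E/E_1$, and in particular $I$, satisfies $\mu(I)\leq\mu(E_2/E_1)=\mu_2$.

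For (2), assume $\mu_2>\mu_3$, so that $\rk(E_2)=2$. Again $\phi_{32}\neq 0$, so $N=\ker(\phi_{32})$ is a line subbundle of the rank $2$ bundle $E_2$, and $\phi_{32}$ induces a nonzero map of line bundles $E_2/N\to E/E_2\otimes K$. As before this forces $\mu(E_2/N)\leq\mu(E/E_2)+(2g-2)$; rewriting $\mu(E_2/N)=\deg(E_2)-\mu(N)=\mu_1+\mu_2-\mu(N)$ and $\mu(E/E_2)=\mu_3$ and rearranging yields the lower bound $\mu(N)\geq\mu_1+\mu_2-\mu_3-(2g-2)$. The upper bound $\mu(N)\leq\mu_1$ then follows from the fact that $E_1$ is the maximal destabilizing subbundle of $E_2$, so $N\subset E_2$ has slope at most $\mu_1$. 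In the boundary cases $\mu_2=\mu_3$ (for (1)) and $\mu_1=\mu_2$ (for (2)) the relevant quotient $E/E_1$, respectively the bundle $E_2$, is semistable, and the two upper bounds persist exactly as noted in the proof of the preceding proposition.

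I do not expect any serious obstacle: the entire content reduces to the degree-nondecreasing property of nonzero line-bundle maps applied to the morphisms induced by $\Phi$, together with the defining slope-maximality of the maximal destabilizing subbundles. The only points deserving a moment's care are (a) confirming that $\phi_{21}$ and $\phi_{32}$ are nonzero, so that $I$ and $N$ really are line bundles of the asserted degrees rather than possibly smaller sheaves — this is precisely where semistability of $(E,\Phi)$ is used — and (b) correctly identifying, in each part, which subquotient realizes the maximal slope. Everything else is bookkeeping with $\deg(E_j)$ expressed in terms of $\mu_1,\mu_2,\mu_3$.
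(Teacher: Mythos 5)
Your proposal is correct and is essentially identical to the paper's own argument: the paper proves this proposition by simply remarking that the bounds on $\mu(I)$ and $\mu(N)$ already appear as intermediate steps in the proof of Proposition~\ref{prop:mu-inequalities}, which is exactly the observation you make and then spell out. Your added care in checking that the nonvanishing of $\phi_{21}$ and $\phi_{32}$ (Remark~\ref{rmk:phi-non-zero}, stated there for stable Higgs bundles) still follows under the semistability hypothesis is a sound refinement of the same reasoning.
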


\section{Limits of the $\C^*$-action}
\label{sec:limits}

The purpose of the present section is to analyse the limit as $z\to 0$
of $z\cdot(E,\Phi)$ as a function of the Harder--Narasimhan type of
$E$. Note that the case of trivial Harder--Narasimhan filtration,
corresponding to $(E,\Phi)$ with semistable underlying vector bundle
$E$, is covered by Proposition~\ref{prop:limit-stable}.

\subsection{Non-trivial Harder--Narasimhan filtrations}

Again we limit ourselves to considering rank $3$
stable Higgs bundles $(E,\Phi)$. We shall use the notation
introduced in Section~\ref{sec:rank-3-higgs}.

\begin{Th}\label{stratifications}
  Let $(E,\Phi) \in \mathcal{M}(3,d)$ be such that $E$ is an unstable
  vector bundle of slope $\mu$ and with Harder--Narasimhan type
  $(\mu_1,\mu_2,\mu_3)$. Then the limit $(E_0,\Phi_0)=
  \lim_{z\to0}(E,z\cdot \Phi)$ is given as follows.

  \begin{enumerate}
  \item[(1)] Assume that $\mu_2<\mu$. Then $\mu_1>\mu_2\geq \mu_3$,
    the subbundle $I\subset E/E_1$ defined in \eqref{eq:def-I} is a
    line bundle, 
    and one of the following alternatives holds.
    \begin{enumerate}
    \item[(1.1)] The slope of $I$ satisfies $\mu_1-(2g-2) \leq
      \mu(I)<-\frac{1}{3}\mu_1+\frac{2}{3}\mu_2+\frac{2}{3}\mu_3$. Then
      $(E_0,\Phi_0)$ is the following Hodge bundle of type $(1,2)$:
      $$
      (E_0,\Phi_0) = \Big(E_1\oplus E/E_1,
      \left( \begin{array}{c c}
                            0     & 0\\ 
                        \phi_{21} & 0
                       \end{array}
      \right) \Big),
      $$
      where $\varphi_{21}$ is induced from $\Phi$.
      The associated graded vector bundle is $\Gr(E_0) = \Gr(E)$.

   \item[(1.2)] The slope of $I$ satisfies $\mu(I) =
     -\frac{1}{3}\mu_1+\frac{2}{3}\mu_2+\frac{2}{3}\mu_3$. Then
     $(E_0,\Phi_0)$ is the following strictly polystable Hodge bundle:
      $$(E_0,\Phi_0) = \Big(E_1\oplus I, \left( \begin{array}{c c c}
                            0        & 0 \\ 
                            \varphi_{21}        & 0 
                     \end{array}
                     \right) \Big)
                     \oplus
      \Big((E/E_1)/I,0\Big),
     $$
     where $\varphi_{21}$ is induced from $\Phi$.
     The associated graded vector bundle is
     $E_0 = \Gr(E_0) = E_1 \oplus (E/E_1)/I \oplus I$ and its
     Harder--Narasimhan type is
     $\HNT(E_0)=(\mu_1,\mu,-\frac{1}{3}\mu_1+\frac{2}{3}\mu_2+\frac{2}{3}\mu_3)$.

    \item[(1.3)] The slope of $I$ satisfies 
      $-\frac{1}{3}\mu_1+\frac{2}{3}\mu_2+\frac{2}{3}\mu_3<\mu(I)\leq
      \mu_3$. Then $(E_0,\Phi_0)$ is the following Hodge bundle of type
      $(1,1,1)$:
      $$(E_0,\Phi_0) = \Big(E_1\oplus I \oplus
      (E/E_1)/I, \left( \begin{array}{c c c}
                            0        & 0 & 0\\ 
                        \varphi_{21} & 0 & 0\\
                            0        & \varphi_{32} & 0
                       \end{array}
                     \right) \Big).
     $$
     Here $\varphi_{21}$ and $\varphi_{32}$ are induced from $\Phi$.
     The associated graded vector bundle is
     $E_0 = \Gr(E_0) = E_1 \oplus (E/E_1)/I \oplus I$ and its
     Harder--Narasimhan type is
     $\HNT(E_0)=(\mu_1,\mu_2+\mu_3-\mu(I),\mu(I))$.

   \item[(1.4)] The slope of $I$ satisfies $\mu(I)=\mu_2$. Then the
     strict inequality $\mu_3<\mu_2$ holds, the line bundle
     $I=E_2/E_1$, and $(E_0,\Phi_0)$ is the following Hodge bundle of
     type $(1,1,1)$:
      $$(E_0,\Phi_0) = \Big(E_1\oplus E_2/E_1 \oplus
      E/E_2, \left( \begin{array}{c c c}
          0        & 0 & 0\\
          \phi_{21} & 0 & 0\\
          0 & \varphi_{32} & 0
                       \end{array}
                     \right) \Big),
     $$ 
     where $\varphi_{32}$ is induced from $\Phi$.
     The associated graded vector bundle is $E_0 = \Gr(E_0) = \Gr(E)$.
    \end{enumerate}
  \item[(2)] Suppose that $\mu_2>\mu$. Then $\mu_1\geq\mu_2>\mu_3$,
    the subbundle $N\subset E_2$ defined in \eqref{eq:def-N} is a
    line bundle,  and
    one of the following alternatives holds.
    \begin{enumerate}
    \item[(2.1)] The slope of $N$ satisfies $\mu_1+\mu_2-\mu_3-(2g-2)
\leq \mu(N)<\mu$. Then $(E_0,\Phi_0)$ is the following Hodge
      bundle of type $(2,1)$:
      $$
      (E_0,\Phi_0) = \Big(E_2\oplus E/E_2,
      \left( \begin{array}{c c}
                            0     & 0\\ 
                        \phi_{32} & 0
                       \end{array}
      \right) \Big).
      $$
      The associated graded vector bundle is $\Gr(E_0) = \Gr(E)$.

    \item[(2.2)] The slope of $N$ satisfies
      $\mu=\mu(N)$. Then
     $(E_0,\Phi_0)$ is the following strictly polystable Hodge bundle:
      $$
      (E_0,\Phi_0) = (N,0) \oplus \Big({E}_2/N \oplus E/E_2,
      \left( \begin{array}{c c c}
                           0 & 0\\ 
                           \varphi_{32} & 0
                       \end{array}
                     \right) \Big)
      $$
      where $\varphi_{32}$ is induced from
      $\Phi$. The associated graded vector bundle is
      $E_0 = \Gr(E_0) = E_2/N \oplus N \oplus E/E_2$ and its
      Harder--Narasimhan type is
      $\HNT(E_0) = (\frac{2}{3}\mu_1+\frac{2}{3}\mu_2-\frac{1}{3}\mu_3,\mu,\mu_3)$.

    \item[(2.3)] The slope of $N$ satisfies
      $\mu<\mu(N)\leq\mu_2$. Then $(E_0,\Phi_0)$ is the following
      Hodge bundle of type $(1,1,1)$:
      $$
      (E_0,\Phi_0) = \Big(N \oplus {E}_2/N \oplus E/E_2,
      \left( \begin{array}{c c c}
                            0        & 0 & 0\\ 
                        \varphi_{21} & 0 & 0\\
                            0        & \varphi_{32} & 0
                       \end{array}
                     \right) \Big)
      $$
      where $\varphi_{21}$ and $\varphi_{32}$ are induced from
      $\Phi$. The associated graded vector bundle is
      $E_0 = \Gr(E_0) = E_2/N \oplus N \oplus E/E_2$ and its
      Harder--Narasimhan type is
      $\HNT(E_0) = (\mu_1+\mu_2-\mu(N),\mu(N),\mu_3)$.

    \item[(2.4)] The slope of $N$ satisfies $\mu(N)=\mu_1$. Then the strict
      inequality $\mu_1>\mu_2$ holds, the line bundle $N=E_1$
      and $(E_0,\Phi_0)$ is the following Hodge bundle of type
      $(1,1,1)$:
      $$(E_0,\Phi_0) = \Big(E_1\oplus E_2/E_1 \oplus
      E/E_2, \left( \begin{array}{c c c}
          0        & 0 & 0\\
          \varphi_{21} & 0 & 0\\
          0 & \varphi_{32} & 0
                       \end{array}
                     \right) \Big),
     $$ 
     where $\varphi_{21}$ and $\varphi_{32}$ are induced from $\Phi$.
     The associated graded vector bundle is $E_0= \Gr(E_0) = \Gr(E)$.
    \end{enumerate}
\item[(3)] Suppose that $\mu_2 = \mu$. Then $\mu_1>\mu_2>\mu_3$, 
    the subbundles $I \subset E/E_1$ and $N\subset E_2$ defined in
    \eqref{eq:def-I} and \eqref{eq:def-N} are
    line bundles,  and
    one of the following alternatives holds.
    \begin{enumerate}
    \item[(3.1)]  The equivalent conditions $N=E_1$ and $I = E_2 /
      E_1$ hold. Then $(E_0,\Phi_0)$ is the following Hodge bundle of type
      $(1,1,1)$:
      $$(E_0,\Phi_0) = \Big(E_1\oplus E_2/E_1 \oplus
      E/E_2, \left( \begin{array}{c c c}
          0        & 0 & 0\\
          \varphi_{21} & 0 & 0\\
          0 & \varphi_{32} & 0
                       \end{array}
                     \right) \Big),
     $$ 
     where $\varphi_{21}$ and $\varphi_{32}$ are induced from $\Phi$.
     The associated graded vector bundle is $E_0 = \Gr(E_0) = \Gr(E)$.
    \item[(3.2)]  Otherwise $(E_0,\Phi_0)$ is the following strictly
      polystable Hodge bundle:
      $$(E_0,\Phi_0) = \Big(E_1 \oplus
      E/E_2, \left( \begin{array}{c c c}
          0 & 0\\
         \varphi_{31} & 0
         \end{array}
       \right) \Big)
       \oplus (E_2/E_1,0),
     $$ 
     where $\varphi_{31}$ is induced from $\Phi$.
     The associated graded vector bundle is $\Gr(E_0) = \Gr(E)$.
    \end{enumerate}
  \end{enumerate}
\end{Th}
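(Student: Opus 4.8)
My plan is to compute every limit as the associated graded of a suitable filtration, using the following elementary mechanism. Call a filtration $0=F_0\subset F_1\subset\dots\subset F_\ell=E$ by holomorphic subbundles \emph{$\Phi$-adapted} if $\Phi(F_k)\subset F_{k+1}\otimes K$ for all $k$. Fixing a $C^\infty$-splitting $E\cong\bigoplus_k F_k/F_{k-1}$ and letting $g_z\in\mathcal{G}^\C$ act by $z^{-k}$ on the $k$-th graded piece, a direct computation shows that $g_z\cdot(\dbar_E,z\Phi)\to(\dbar_{\Gr},\Phi_1)$ as $z\to0$, where $\dbar_{\Gr}$ is the operator of $\Gr_F(E)$ and $\Phi_1$ is the induced weight-one field $F_k/F_{k-1}\to(F_{k+1}/F_k)\otimes K$: adaptedness is exactly what makes every component of $\Phi$ survive with a non-negative power of $z$, the weight-one part being the only one with exponent zero. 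Since $g_z$ is a gauge transformation, this family represents the orbit $z\mapsto z\cdot[(E,\Phi)]$, so whenever $(\Gr_F E,\Phi_1)$ is polystable it is a genuine point of $\mathcal M(3,d)$ and hence equals $(E_0,\Phi_0)$. Note that a two-step filtration $0\subset F_1\subset E$ is always adapted, while a three-step filtration is adapted only under a genuine condition on $\Phi$.

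The proof is then a case analysis whose core is to select, in each Harder--Narasimhan regime, the adapted filtration whose graded is polystable. I would begin with the coarse two-step filtration in the unstable direction: $0\subset E_1\subset E$ when $\mu_1>\mu$ (cases (1),(3)) and $0\subset E_2\subset E$ when $\mu_2>\mu$ (case (2)). In case (1) this produces the type $(1,2)$ graded $(E_1\oplus E/E_1,\phi_{21})$, whose stability is controlled by the single $\Phi$-invariant subbundle $E_1\oplus I$; the dividing slope in the theorem is precisely the value where $\mu(E_1\oplus I)=\frac12(\mu_1+\mu(I))$ equals $\mu$, which rearranges to $\mu(I)=-\frac13\mu_1+\frac23\mu_2+\frac23\mu_3$. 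Below this value the graded is stable and is the limit (1.1); at it the graded is strictly semistable with polystable representative $(E_1\oplus I,\phi_{21})\oplus((E/E_1)/I,0)$, giving (1.2); above it $E_1\oplus I$ overdestabilizes, so I pass to the finer filtration $0\subset E_1\subset\tilde E_2\subset E$ with $\tilde E_2$ the preimage of $I$ -- adapted precisely because $\phi_{21}(E_1)\subset I\otimes K$ by the definition of $I$ -- whose type $(1,1,1)$ graded I would check is stable, yielding (1.3) and, at $\mu(I)=\mu_2$ with $I=E_2/E_1$, case (1.4). Case (2) is entirely dual, with $N=\ker\phi_{32}$ playing the role of $I$ and threshold $\mu(N)=\mu$.

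Two auxiliary facts recur. The induced maps are non-zero where claimed: $\phi_{21},\phi_{32}\neq0$ by Remark~\ref{rmk:phi-non-zero}, and the second weight-one map of a refined filtration (e.g.\ $I\to((E/E_1)/I)\otimes K$) is non-zero, since otherwise $\tilde E_2$ would be a $\Phi$-invariant subbundle of slope $>\mu$, contradicting semistability of $(E,\Phi)$. And the slope ranges partition the intervals of Proposition~\ref{prop:mu-I-N} without gaps: because the subquotients $E_2/E_1$ and $E/E_2$ are semistable, any line subbundle $I\subset E/E_1$ is either $E_2/E_1$ (slope $\mu_2$) or injects into $E/E_2$ and so has $\mu(I)\leq\mu_3$, which explains the jump from $\mu_3$ to $\mu(I)=\mu_2$ and isolates case (1.4); the dual statement controls $\mu(N)$.

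The main obstacle is case (3), $\mu_2=\mu$, where the middle piece $E_2/E_1$ has slope exactly $\mu$, so the coarse graded $(E_1\oplus E/E_1,\phi_{21})$ is never stable: $E_2/E_1\subset E/E_1$ is already a $\Phi$-invariant line of slope $\mu$. The bifurcation is governed by the corner map $\phi_{31}\colon E_1\to(E/E_2)\otimes K$, whose vanishing is equivalent to $N=E_1$ and to $I=E_2/E_1$. When $\phi_{31}=0$ the coarse graded is genuinely unstable ($E_1\oplus I=E_2$ has slope $>\mu$), so I would instead use the Harder--Narasimhan filtration, now adapted precisely because $\phi_{31}=0$, and check that its type $(1,1,1)$ graded is stable, giving (3.1). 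When $\phi_{31}\neq0$ the coarse graded is strictly semistable, and the delicate step is to compute its polystable representative: splitting off the slope-$\mu$ line $E_2/E_1$ leaves the rank-two quotient $(E_1\oplus E/E_2,\phi_{31})$, which has slope $\mu$ and is stable since $\phi_{31}\neq0$, producing the strictly polystable limit of (3.2). I expect the bookkeeping of exactly which filtration is adapted and polystable in each subcase, together with the explicit computation of the polystable representatives in the boundary cases (1.2), (2.2) and (3.2), to be the most delicate part of the argument.
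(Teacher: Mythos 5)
Your proposal is correct, and its core mechanism is exactly the paper's: constant diagonal gauge transformations $\mathrm{diag}(1,z,z^{2})$ (or $\mathrm{diag}(1,z)$) applied to a $C^\infty$ splitting of a filtration with $\Phi(F_k)\subset F_{k+1}\otimes K$, so that the configuration-space limit is the associated graded with its weight-one field, followed by a (poly)stability check; the paper runs this concretely for the two-step filtrations $0\subset E_1\subset E$, $0\subset E_2\subset E$ and their three-step refinements by $I$ and $N$, which are precisely your adapted filtrations. You deviate in three local but genuine ways. First, to isolate cases (1.4)/(2.4) the paper invokes Shatz's specialization theorem (the Harder--Narasimhan polygon rises under specialization) to force $\mu(I)\leq\mu_3$ or $\mu(I)=\mu_2$; you instead argue directly that a saturated line subbundle of $E/E_1$ is either $E_2/E_1$ or injects into $E/E_2$, which is more elementary and avoids the external citation. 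Second, in case (3) you organize the bifurcation around the corner map $\varphi_{31}$ and prove $\varphi_{31}=0\iff N=E_1\iff I=E_2/E_1$ up front, whereas the paper obtains this equivalence only a posteriori from the mutual exclusivity of outcomes (3.1) and (3.2); you also reach (3.2) via the coarse filtration in the $E_1$-direction where the paper uses the $E_2$-direction, with the same polystable representative. Third, you handle the thresholds (1.2)/(2.2) by taking the Jordan--H\"older graded of the coarse two-step limit, while the paper reaches them through the refined filtration and sets $\varphi_{32}$ (resp.\ $\varphi_{21}$) to zero --- equivalent computations. Two small points to tighten when writing up: in (1.1), stability is not controlled by $E_1\oplus I$ alone --- you must also dispose of $E/E_1$, of line subbundles $L\subset E/E_1$, and of rank-two invariant subbundles other than $E_1\oplus I$ (whose slopes are bounded by $\mu(E_1\oplus I)$), all of which are $<\mu$ exactly because of the hypothesis $\mu_2<\mu$; on the other hand, your explicit verification that the induced map $I\to\bigl((E/E_1)/I\bigr)\otimes K$ is non-zero above the threshold is a detail the paper leaves implicit, and it is actually needed for the paper's enumeration of invariant subbundles to be complete.
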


\begin{Rmk}
  The Cases (1.2), (2.2) and (3) cannot happen when the rank
  and degree are co-prime, i.e., $(3,d)=1$.
\end{Rmk}

\begin{Rmk}
  \label{rem:range}
  The condition $\mu_2<\mu$ is equivalent to
  $\mu_3>-\frac{1}{3}\mu_1+\frac{2}{3}\mu_2+\frac{2}{3}\mu_3$. In
  particular the range for $\mu(I)$ in Case~(1.2) is non-empty.
\end{Rmk}

Before proceeding with the proof of Theorem~\ref{stratifications} we
deduce a couple of interesting consequences.  The theorem shows that,
in general, knowledge of the Harder--Narasimhan type of $E$ does not
suffice to determine the underlying bundle $E_0$ of the limit
$(E_0,\Phi_0) = \lim_{z\to 0}(E,z\cdot\Phi)$. However, there are some
Harder--Narasimhan types $(\mu_1,\mu_2,\mu_3)$ for which $E_0$ is
determined by $E$. We note that, by
Proposition~\ref{prop:mu-inequalities}, one has $0\leq \mu_1-\mu_3\leq
4g-4$.

\begin{Cor}
\label{cor:111a}
  Let $(E,\Phi) \in \mathcal{M}(3,d)$ be such that $E$ is an unstable
  vector bundle of slope $\mu$ and Harder--Narasimhan type
  $(\mu_1,\mu_2,\mu_3)$. Assume that $\mu_1-\mu_3>2g-2$. Then the
  limit $(E_0,\Phi_0)= \lim_{z\to0}(E,z\cdot \Phi)$ is given by
  (1.4) of Theorem~\ref{stratifications} if $\mu_2<\mu$, by
  (2.4) of Theorem~\ref{stratifications} if $\mu_2>\mu$, and by
  (3.1) of Theorem~\ref{stratifications} if $\mu_2=\mu$.
  In particular $E_0 = \Gr(E_0) = \Gr(E)$.
\end{Cor}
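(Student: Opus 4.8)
The plan is to read off the answer directly from Theorem~\ref{stratifications} and to use the single slope hypothesis $\mu_1-\mu_3>2g-2$ to select, in each of the three regimes $\mu_2<\mu$, $\mu_2>\mu$ and $\mu_2=\mu$, exactly the extreme alternative (1.4), (2.4) or (3.1). The guiding observation is that within each regime the alternatives produced by the theorem are distinguished solely by the slope of the line bundle $I\subset E/E_1$ (respectively $N\subset E_2$), and that the non-extreme alternatives all confine that slope below a threshold which the hypothesis forces it to exceed.

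First I would treat the case $\mu_2<\mu$, so that we are in Case~(1) of Theorem~\ref{stratifications} and $I$ is a line bundle. The alternatives (1.1)--(1.3) are precisely those with $\mu(I)\leq\mu_3$ (for (1.1) and (1.2) this uses $-\tfrac13\mu_1+\tfrac23\mu_2+\tfrac23\mu_3<\mu_3$, which is the content of Remark~\ref{rem:range}), whereas (1.4) is the case $\mu(I)=\mu_2$. Proposition~\ref{prop:mu-I-N}(1) gives $\mu(I)\geq\mu_1-(2g-2)$, and the hypothesis rewrites as $\mu_1-(2g-2)>\mu_3$; hence $\mu(I)>\mu_3$, the alternatives (1.1)--(1.3) are excluded, and (1.4) remains. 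The regime $\mu_2>\mu$ runs in parallel: we are in Case~(2), $N$ is a line bundle, the alternatives (2.1)--(2.3) are exactly those with $\mu(N)\leq\mu_2$, and Proposition~\ref{prop:mu-I-N}(2) yields $\mu(N)\geq\mu_1+\mu_2-\mu_3-(2g-2)>\mu_2$, the last step being again just the hypothesis. Thus only (2.4) survives.

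For the borderline regime $\mu_2=\mu$ we are in Case~(3), where $\mu_1>\mu_2>\mu_3$ and both $I$ and $N$ are line bundles, and here I would apply both bounds simultaneously. As above, $\mu(I)\geq\mu_1-(2g-2)>\mu_3$ forces $\mu(I)=\mu_2$, while $\mu(N)\geq\mu_1+\mu_2-\mu_3-(2g-2)>\mu_2$ forces $\mu(N)=\mu_1$; since $E_2/E_1$ and $E_1$ are the unique maximal destabilizing line subbundles of the rank~$2$ bundles $E/E_1$ and $E_2$ respectively, these slope equalities upgrade to the identifications $I=E_2/E_1$ and $N=E_1$, which are exactly the equivalent conditions characterising (3.1). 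In each of the three regimes the selected alternative records $E_0=\Gr(E_0)=\Gr(E)$, giving the final assertion. The main point---and the only thing requiring real verification---is that the lower slope bounds of Proposition~\ref{prop:mu-I-N} strictly clear the thresholds $\mu_3$ and $\mu_2$ separating the generic alternatives from the extreme ones; the inequality $\mu_1-\mu_3>2g-2$ is precisely calibrated to achieve this in all cases, so no genuine obstacle arises beyond this bookkeeping.
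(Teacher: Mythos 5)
Your proposal is correct, and for the regimes $\mu_2<\mu$ and $\mu_2>\mu$ it is the paper's own argument read in the contrapositive direction: the paper shows that in Cases (1.1)--(1.3) and (2.1)--(2.3) one has $\mu(I)\leq\mu_3$ (via Remark~\ref{rem:range}) and $\mu(N)\leq\mu_2$ respectively, which combined with Proposition~\ref{prop:mu-I-N} forces $\mu_1-\mu_3\leq 2g-2$; you run the same two inequalities forward to exclude those cases under the hypothesis. The only genuine divergence is the borderline regime $\mu_2=\mu$: the paper disposes of Case~(3.2) by noting that there $\varphi_{31}\neq 0$, so a non-zero map of line bundles $E_1\to E/E_2\otimes K$ gives $\mu_1-\mu_3\leq 2g-2$; you instead reuse Proposition~\ref{prop:mu-I-N} to get $\mu(N)>\mu_2$ and $\mu(I)>\mu_3$ and conclude $N=E_1$ and $I=E_2/E_1$, which buys a uniform argument across all three regimes at the cost of invoking the proposition twice. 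That works, but note a small logical inversion in your Case~(3): the inequality $\mu(N)>\mu_2$ does not by itself force $\mu(N)=\mu_1$; rather, if $N\neq E_1$ then the projection $N\to E_2/E_1$ is a non-zero map of line bundles, giving $\mu(N)\leq\mu(E_2/E_1)=\mu_2$, a contradiction --- so one concludes $N=E_1$ first and only then $\mu(N)=\mu_1$ (this is exactly the observation made in the paper's proof of Case~(3) of Theorem~\ref{stratifications}, and the same remark applies to $I$). With that step stated in the correct order, your proof is complete.
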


\begin{proof}
  We only have to show that in all the other cases of
  Theorem~\ref{stratifications} we have $\mu_1-\mu_3\leq 2g-2$.

  In Cases~(1.1), (1.2) and (1.3) we have $\mu(I)\leq \mu_3$
  (cf.~Remark~\ref{rem:range}). Moreover, by (1) of
  Proposition~\ref{prop:mu-I-N}, we have $\mu_1-(2g-2)\leq\mu(I)$. It
  follows that $\mu_1-(2g-2)\leq\mu_3$ as desired.

  Similarly, in Cases~(2.1), (2.2) and (2.3) we have $\mu(N)\leq\mu_2$ and,
  by (2) of Proposition~\ref{prop:mu-I-N}, $\mu_1+\mu_2-\mu_3-(2g-2)
  \leq \mu(N)$. Hence $\mu_1+\mu_2-\mu_3-(2g-2)\leq\mu_2$ which gives
  the conclusion.

  Finally, in Case~(3.2) we have $\varphi_{31}\neq 0$ (since otherwise
  $E$ would be semistable) and hence $\mu_1-\mu_3\leq 2g-2$.
\end{proof}

In a similar vein, we shall next see that certain types of Hodge bundles
can only be the limit of a Higgs bundle whose underlying vector bundle
has the same Harder--Narasimhan type as that of the Hodge bundle.

Before stating the result we recall (see, e.g., \cite{got} or
Hausel--Thaddeus~\cite{hausel-thaddeus:2003}) that fixed points of type
$(1,1,1)$ of the form
\begin{displaymath}
  (E_0,\Phi_0) = (L_1 \oplus L_2 \oplus L_3,
\left( \begin{array}{c c c}
               0      &       0      & 0\\
         \varphi_{21} &       0      & 0\\
               0      & \varphi_{32} & 0
        \end{array} 
 \right)
)
\end{displaymath}
are usually parametrised by the numerical invariants
\begin{align*}
  m_1 &= \deg(L_2)-\deg(L_1) + 2g-2,\\
  m_2 &= \deg(L_3)-\deg(L_2) + 2g-2,
\end{align*}
subject to the conditions
\begin{align*}
  m_i & \geq 0, \quad i=1,2,\\
  2m_1+m_2 &< 6g-6,\\
  m_1+2m_2 &< 6g-6,\\
  m_1+2m_2 &\equiv 0 \pmod 3.
\end{align*}
For our purposes it is more natural to translate to the invariants
$(l_1,l_2,l_3)$ with $l_i=\mu(L_i)=\deg(L_i)$ (subject to
the condition $l_1+l_2+l_3=3\mu$). We then have corresponding components
$F_{(l_1,l_2,l_3)}$ of the fixed locus
and the invariants $(l_1,l_2,l_3)$ are subject to the constraints
\begin{align*}
  l_{i+1}-l_i + 2g-2&\geq 0,\quad i=1,2,\\
  \frac{1}{3}l_1+\frac{1}{3}l_2-\frac{2}{3}l_3 &>0,\\
  \frac{2}{3}l_1-\frac{1}{3}l_2-\frac{1}{3}l_3 &>0.
\end{align*}

\begin{Cor}
\label{cor:111b}
  Let $(E_0,\Phi_0) = 
  (L_1\oplus L_2\oplus L_3,
   \left(
    \begin{smallmatrix}
       0      &       0      & 0\\
         \varphi_{21} &       0      & 0\\
               0      & \varphi_{32} & 0
    \end{smallmatrix}
   \right)
  )$ be a Hodge bundle
  of type $(1,1,1)$ with $\mu(L_1)-\mu(L_3)>2g-2$. Then
  $\mu(L_1)>\mu(L_2)>\mu(L_3)$ and any $(E,\Phi)$ such that
  $\lim_{z\to0}(E,z\cdot \Phi) = (E_0,\Phi_0)$ satisfies
  $E_0 = \Gr(E_0) = \Gr(E)$.
\end{Cor}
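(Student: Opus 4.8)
The plan is to combine Theorem~\ref{stratifications} with the slope bounds of Proposition~\ref{prop:mu-I-N}, reducing the statement to the observation that the large gap $\mu(L_1)-\mu(L_3)>2g-2$ is incompatible with precisely those cases of the theorem in which $\Gr(E_0)\neq\Gr(E)$.

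First I would settle the ordering claim. Writing $l_i=\mu(L_i)$, the parametrization constraints recalled above the corollary give $l_{i+1}-l_i+2g-2\geq 0$, i.e.\ $l_1-l_2\leq 2g-2$ and $l_2-l_3\leq 2g-2$. If $l_1\leq l_2$ then $l_1-l_3=(l_1-l_2)+(l_2-l_3)\leq 2g-2$, contradicting the hypothesis; hence $l_1>l_2$, and symmetrically $l_2>l_3$. Together with stability this also forces $\varphi_{21}\neq 0$ and $\varphi_{32}\neq 0$, so $(E_0,\Phi_0)$ is indecomposable with $\Phi_0\neq 0$; I will use this below.

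Next, let $(E,\Phi)$ be any Higgs bundle with $\lim_{z\to 0}(E,z\cdot\Phi)=(E_0,\Phi_0)$. Since $(E_0,\Phi_0)$ is a fixed point of type $(1,1,1)$ with $\Phi_0\neq 0$, it does not lie in $F_{\min}=\mathcal{N}(3,d)$; by Proposition~\ref{prop:limit-stable} the underlying bundle $E$ must therefore be unstable, and Theorem~\ref{stratifications} applies. Because $(E_0,\Phi_0)$ is an indecomposable Hodge bundle of type $(1,1,1)$, the limit can be neither of type $(1,2)$ or $(2,1)$ nor strictly polystable; this rules out Cases~(1.1), (2.1), (2.2) as well as the decomposable Cases~(1.2) and (3.2), leaving only the connected type $(1,1,1)$ alternatives (1.3), (1.4), (2.3), (2.4), (3.1).

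The crux is to eliminate (1.3) and (2.3), the two cases in which $\Gr(E_0)\neq\Gr(E)$, by reading off the graded slopes from the stated Harder--Narasimhan types and comparing with $l_1-l_3$. In Case~(1.3) one has $\HNT(E_0)=(\mu_1,\mu_2+\mu_3-\mu(I),\mu(I))$, so $l_1-l_3=\mu_1-\mu(I)$; but Proposition~\ref{prop:mu-I-N}(1) gives $\mu(I)\geq\mu_1-(2g-2)$, whence $l_1-l_3\leq 2g-2$, contradicting the hypothesis. In Case~(2.3) one has $\HNT(E_0)=(\mu_1+\mu_2-\mu(N),\mu(N),\mu_3)$, so $l_1-l_3=\mu_1+\mu_2-\mu_3-\mu(N)$, and Proposition~\ref{prop:mu-I-N}(2) gives $\mu(N)\geq\mu_1+\mu_2-\mu_3-(2g-2)$, again forcing $l_1-l_3\leq 2g-2$. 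Both cases are thus excluded, so we are left with (1.4), (2.4) and (3.1), in each of which the theorem records $E_0=\Gr(E_0)=\Gr(E)$, as desired. I expect the only delicate point to be the case bookkeeping of the previous paragraph—in particular, justifying that the given $(E_0,\Phi_0)$ really forces one of the five type $(1,1,1)$ alternatives and correctly matching the decreasing-order entries of $\HNT(E_0)$ to $(l_1,l_2,l_3)$—after which the two slope comparisons are immediate.
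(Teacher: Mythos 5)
Your proposal is essentially the paper's own argument, spelled out in more detail: the paper likewise derives $\varphi_{21},\varphi_{32}\neq 0$ from polystability and the gap hypothesis, inspects the cases of Theorem~\ref{stratifications}, and observes that only (1.4), (2.4) and (3.1) can yield a type $(1,1,1)$ Hodge bundle with $\mu(L_1)-\mu(L_3)>2g-2$, in which cases $E_0=\Gr(E_0)=\Gr(E)$. Your reduction via Proposition~\ref{prop:limit-stable} (to ensure $E$ is unstable so that the theorem applies) and your elimination of the wrong-type and strictly polystable cases are fine.

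However, the step you yourself single out as delicate is done incorrectly, though harmlessly. In Cases~(1.3) and (2.3) the weight ordering of the limiting Hodge bundle does \emph{not} agree with the decreasing ordering of $\HNT(E_0)$; this disagreement is precisely what separates (1.3) from (1.4) and (2.3) from (2.4). In Case~(1.3) the weight decomposition is $L_1=E_1$, $L_2=I$, $L_3=(E/E_1)/I$ with $\mu(I)\leq\mu((E/E_1)/I)$, so the relevant gap is $l_1-l_3=\mu_1-\mu_2-\mu_3+\mu(I)$, not $\mu_1-\mu(I)$; in Case~(2.3) one has $l_1=\mu(N)$, not $\mu_1+\mu_2-\mu(N)$, so $l_1-l_3=\mu(N)-\mu_3$. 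Your claimed equalities are therefore false. What saves the argument is that $l_1$ and $l_3$ are among the entries of $\HNT(E_0)$ (the HNT of a direct sum of line bundles is the sorted slope sequence), so $l_1-l_3$ is bounded above by the largest-minus-smallest HNT entry, and that larger quantity is exactly what your two applications of Proposition~\ref{prop:mu-I-N} bound by $2g-2$; with the equalities replaced by this inequality, the exclusions of (1.3) and (2.3) are correct. A smaller remark: the constraints $l_{i+1}-l_i+2g-2\geq 0$ that you invoke presuppose $\varphi_{21},\varphi_{32}\neq 0$, whereas the corollary grants only polystability, not stability, so your first paragraph is mildly circular. The paper closes this loop by deducing non-vanishing directly from polystability together with the gap hypothesis: if, say, $\varphi_{21}=0$, then $(L_1,0)$ splits off as a Higgs summand, forcing $\mu(L_1)=\mu(E_0)$ and then $\mu(L_1)-\mu(L_3)\leq g-1$, contradicting $\mu(L_1)-\mu(L_3)>2g-2$.
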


\begin{proof}
  It is easy to see that polystability of $(E_0,\Phi_0)$ and the
  condition $\mu(L_1)-\mu(L_3)>2g-2$ together imply that
  $\varphi_{21}$ and $\varphi_{32}$ non-zero.

  Inspecting the various cases of Theorem~\ref{stratifications} we
  see that only in cases~(1.4), (2.4) and (3.1) the limit is a
  Hodge bundle of type $(1,1,1 )$ with $\mu(L_1)-\mu(L_3)>2g-2$.
  The conclusion follows since in these cases $E_0=\Gr(E_0)=\Gr(E)$.
\end{proof}

The two previous corollaries lead to an identification between
Shatz and Bia{\l}ynicki--Birula strata in some cases. 
Recall that $U^+_{(l_1,l_2,l_3)}$ denotes the Bia{\l}ynicki-Birula
stratum of Higgs bundles whose limits lie in $F_{(l_1,l_2,l_3)}$ and
that $U'_{(l_1,l_2,l_3)}$ denotes the Shatz stratum of Higgs bundles
whose Harder--Narasimhan type is $(l_1,l_2,l_3)$.

\begin{Th}
  \label{thm:HN=BB}
  Let $(l_1,l_2,l_3)$ be such that $l_1-l_3>2g-2$. Then the
  corresponding Shatz and Bia{\l}ynicki-Birula strata in
  $\mathcal{M}(3,d)$ coincide:
  \begin{displaymath}
    U'_{(l_1,l_2,l_3)} = U^+_{(l_1,l_2,l_3)}.
  \end{displaymath}
  \qed
\end{Th}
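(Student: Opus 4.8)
The plan is to establish the two inclusions $U'_{(l_1,l_2,l_3)} \subseteq U^+_{(l_1,l_2,l_3)}$ and $U^+_{(l_1,l_2,l_3)} \subseteq U'_{(l_1,l_2,l_3)}$ separately, using Corollary~\ref{cor:111a} and Corollary~\ref{cor:111b} respectively. Throughout, the standing hypothesis $l_1-l_3>2g-2$ is what makes the correspondence clean, since by Corollary~\ref{cor:111a} it forces the underlying bundle of the limit to be $\Gr(E)$, and hence to retain the Harder--Narasimhan type of $E$. First I would observe that for a point $(E,\Phi) \in U'_{(l_1,l_2,l_3)}$ the underlying bundle $E$ has Harder--Narasimhan type $(l_1,l_2,l_3)$ with $l_1-l_3>2g-2$. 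Applying Corollary~\ref{cor:111a}, the limit $(E_0,\Phi_0)=\lim_{z\to0}(E,z\cdot\Phi)$ is given by one of Cases~(1.4), (2.4) or (3.1) of Theorem~\ref{stratifications}, and in all three the limit is a Hodge bundle of type $(1,1,1)$ with $E_0=\Gr(E_0)=\Gr(E)$. Reading off the line bundle summands, the invariants $(\mu(L_1),\mu(L_2),\mu(L_3))$ of this fixed point are exactly $(l_1,l_2,l_3)$, so the limit lies in $F_{(l_1,l_2,l_3)}$ and therefore $(E,\Phi)\in U^+_{(l_1,l_2,l_3)}$.

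For the reverse inclusion, take $(E,\Phi)\in U^+_{(l_1,l_2,l_3)}$, so that $\lim_{z\to0}(E,z\cdot\Phi)$ lies in $F_{(l_1,l_2,l_3)}$, i.e.\ equals a Hodge bundle $(E_0,\Phi_0)=(L_1\oplus L_2\oplus L_3,\Phi_0)$ of type $(1,1,1)$ with $\mu(L_i)=l_i$ and $l_1-l_3>2g-2$. Here I would invoke Corollary~\ref{cor:111b}, which applies precisely because $\mu(L_1)-\mu(L_3)=l_1-l_3>2g-2$: it guarantees that any such $(E,\Phi)$ satisfies $E_0=\Gr(E_0)=\Gr(E)$. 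Since the associated graded of $E$ is $L_1\oplus L_2\oplus L_3$ with the strict inequalities $\mu(L_1)>\mu(L_2)>\mu(L_3)$ supplied by the same corollary, the Harder--Narasimhan type of $E$ is $(l_1,l_2,l_3)$, whence $(E,\Phi)\in U'_{(l_1,l_2,l_3)}$.

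Two technical points deserve attention, and I expect the verification of the first to be the only mildly delicate step. First, one must confirm that the constraints listed before Corollary~\ref{cor:111b} (positivity of $\tfrac{1}{3}l_1+\tfrac{1}{3}l_2-\tfrac{2}{3}l_3$ and of $\tfrac{2}{3}l_1-\tfrac{1}{3}l_2-\tfrac{1}{3}l_3$, together with $l_1+l_2+l_3=3\mu$) are compatible with, and in fact implied in the relevant direction by, the Harder--Narasimhan convexity $l_1\geq l_2\geq l_3$; this ensures that the components $F_{(l_1,l_2,l_3)}$ indexing the Bia\l ynicki-Birula strata are exactly the ones arising as limits, with no spurious or missing indices. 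Second, one should note that the statement is an equality of subspaces of $\mathcal{M}(3,d)$, so strictly I am arguing the equality of point sets; any scheme- or topology-level refinement is already encoded in the Bia\l ynicki-Birula and Shatz theory recalled in Section~\ref{sec:preliminaries}. The genuine content is entirely contained in the two corollaries, so the proof reduces to assembling them: the hard part was proving Theorem~\ref{stratifications}, and once that and its corollaries are in hand, this theorem is a direct packaging of Cases~(1.4), (2.4), (3.1) into the language of strata.
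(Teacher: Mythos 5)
Your proof is correct and follows exactly the paper's (implicit) argument: the paper states Theorem~\ref{thm:HN=BB} as an immediate consequence of Corollary~\ref{cor:111a} (giving $U'_{(l_1,l_2,l_3)} \subseteq U^+_{(l_1,l_2,l_3)}$) and Corollary~\ref{cor:111b} (giving the reverse inclusion), which is precisely the two-inclusion assembly you carry out. Your additional remarks on the fixed-point constraints and set-theoretic nature of the equality are harmless elaborations beyond what the paper records.
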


\subsection{Proof of Theorem~\ref{stratifications}}

For the proof, we adopt the complex analytic point of view as
explained in Section~\ref{sec:higgs-bundles}. Let $\mathcal{E}$ be the
$C^\infty$ bundle underlying $E$ and consider the pair
$(\dbar_E,\Phi)$ representing $(E,\Phi)$ in the configuration space of
all Higgs bundles.  Our strategy of proof is to find a family of gauge
transformations $g(z)\in\mathcal{G}^\C$, parametrised by $z\in\C^*$, such that the limit in the
configuration space
\begin{displaymath}
  (\dbar_{E_0},\Phi_0) = \lim_{z\to 0}\bigl(g(z)\cdot(\dbar_E,z\cdot\Phi)\bigr)
\end{displaymath}
gives a \emph{stable} Higgs bundle $(E_0,\Phi_0)$. It will then follow that
$(E_0,\Phi_0)$ represents the limit in the moduli space.

We now need to consider several cases.

\subsubsection{Proof of Theorem~\ref{stratifications} -- Case (1)}
\label{sec:proof-case1}

Suppose that $\mu_2<\mu$. Then, since $\mu_1>\mu$, we must have
$\mu_1>\mu_2\geq \mu_3$. It follows from (1) of
Proposition~\ref{prop:mu-I-N} that $I \subset E/E_1$ is a line bundle
and that $\mu_1-(2g-2) \leq \mu(I) \leq \mu_2$.

We consider two separate cases.

\paragraph{Case A: $\mu_1-(2g-2) \leq \mu(I) <
-\frac{1}{3}\mu_1+\frac{2}{3}\mu_2+\frac{2}{3}\mu_3$.}
\mbox{}

We have a short exact sequence $0 \rightarrow E_1 \rightarrow E
\rightarrow E/E_1 \rightarrow 0.$ Let $\mathcal{E}$, $\mathcal{E}_1$
and $\mathcal{E}_2$ be the $C^\infty$ vector bundles underlying $E$,
$E_1$ and $E/E_1$, respectively. Then
\begin{equation}
  \label{eq:smooth-decomposition-12}
  \mathcal{E} \cong {\mathcal{E}}_1 \oplus \mathcal{E}_2
\end{equation}
and the holomorphic structure on $\mathcal{E}$ is given by the
$\dbar$-operator:
$$
\bar{\partial}_E = \left( \begin{array}{c c}
                        \bar{\partial}_1 & \beta \\ 0 & \bar{\partial}_2
                       \end{array}
\right),
$$
where $\bar{\partial}_1$ and $\bar{\partial}_2$ are $\dbar$-operators
defining the holomorphic structures on $\mathcal{E}_1$ and
$\mathcal{E}_2$, respectively, and $\beta \in
A^{0,1}(\Hom(\mathcal{E}_2,\mathcal{E}_1))$.  With respect
to the smooth decomposition (\ref{eq:smooth-decomposition-12}), the
Higgs field $\Phi\in A^{1,0}(\End(\mathcal{E}))$ takes the form:
$$
\Phi = \left( \begin{array}{c c}
                        \phi_{11} & \phi_{12}\\ 
                        \phi_{21} & \phi_{22}
                \end{array}
\right).
$$

Consider, for each $z\in \C^ *$, the constant gauge transformation
$g(z)\in\mathcal{G}^\C$ defined by
$$
g(z):=\left( \begin{array}{c c}
                        1 & 0\\ 0 & z\cdot I
                       \end{array}
 \right),
$$
with respect to the decomposition (\ref{eq:smooth-decomposition-12}).
Then:
\begin{displaymath}
  g(z)\cdot (z\cdot \Phi)=g(z)^{-1}(z\cdot \Phi)g(z)
  =\left( \begin{array}{c c} z \cdot \phi_{11} & z^2 \cdot \phi_{12}\\
      \phi_{21} & z \cdot \phi_{22}
    \end{array}
  \right) 
  \to \left( \begin{array}{c c} 0 & 0\\ \phi_{21} & 0
    \end{array}
  \right)\ \textmd{when}\ z\to 0
\end{displaymath}
and, moreover,
$$
g(z)\cdot \bar{\partial}_{E}=g(z)^{-1}\circ\bar{\partial}_{E}\circ g(z)= \left( \begin{array}{c c}
    \bar{\partial}_1 & z\cdot \beta \\ 0 & \bar{\partial}_2
                       \end{array}
 \right)
 \to 
 \left( \begin{array}{c c}
                        \bar{\partial}_1 & 0 \\ 0 & \bar{\partial}_2
                       \end{array}
 \right)\
 \textmd{when}\ z\to0.
$$
Note that this simple formula for the gauge transformed
$\bar{\partial}$-operator is valid because the gauge transformation is
constant on $X$.
Thus, in the configuration space of all Higgs bundles the limit
$\lim_{z\to 0}z\cdot(E,\Phi)$ is gauge equivalent to
\begin{displaymath}
  (E_0,\Phi_0) = \Bigl(E_1\oplus E/E_1,
  \begin{pmatrix}
    0 & 0\\
    \phi_{21} & 0
  \end{pmatrix}
\Bigr).
\end{displaymath}
This Higgs bundle will represent the limit in the moduli space
$\mathcal{M}(3,d)$ provided that it is stable.

To show stability, we note that there are three kinds of
$\Phi_0$-invariant subbundles of $E_0$, namely $E_1\oplus I$, $E/E_1$, and
an arbitrary line bundle $L \subset E/E_1$. We deal with each case
in turn:

\begin{enumerate}
\item \emph{The subbundle $E_1\oplus I \subset E_1\oplus E/E_1$.}  By
  hypothesis
  $\mu(I)<-\frac{1}{3}\mu_1+\frac{2}{3}\mu_2+\frac{2}{3}\mu_3$ which
  is equivalent to $\mu(E_1\oplus I)<\mu(E)=\mu(E_0)$ as required.
\item 
  \emph{The subbundle $E/E_1\subset E_1\oplus E/E_1$.}
  It is immediate from the properties of the Harder--Narasimhan
  filtration that $\mu(E/E_1)<\mu(E)=\mu(E_0)$.
\item \emph{A line subbundle $L \subset E/E_1$.} From the properties
  of the Harder--Narasimhan filtration we have that either $E_2/E_1
  \subset E/E_1$ is maximal destabilizing (if $\mu_2<\mu_3$) or $E/E_1$ is
  semistable (if $\mu_2=\mu_3$). Either way we have that
  $\mu(L)\leq\mu_2$. Since $\mu_2<\mu=\mu(E)$ by hypothesis, it
  follows that $\mu(L)<\mu(E)=\mu(E_0)$.
\end{enumerate}

Finally note that, clearly, $\Gr(E_0) = E_1\oplus E_2/E_1\oplus E/E_2
=\Gr(E)$. Altogether we have seen that, under the given conditions on
the slope of $I$, the limiting bundle $(E_0,\Phi_0)$ is as stated in
Case~(1.1) of the theorem.

\paragraph{Case B:
  $-\frac{1}{3}\mu_1+\frac{2}{3}\mu_2+\frac{2}{3}\mu_3\leq\mu(I)\leq \mu_2$.}
\mbox{}

Define $Q=(E/E_1)/I$ so that we have a short exact sequence $0
\rightarrow I \rightarrow E/E_1 \rightarrow Q \rightarrow 0.$ Let
$\mathcal{E}_1$, $\mathcal{I}$ and $\mathcal{Q}$ be the $C^\infty$
bundles underlying $E_1$, $I$ and $Q$, respectively, so that we have a
$C^\infty$-decomposition
\begin{equation}
  \label{eq:smooth-decomposition111}
  \mathcal{E} = \mathcal{E}_1 \oplus \mathcal{I} \oplus \mathcal{Q}.
\end{equation}
Recalling that $\mathcal{I}$ comes from $\Phi(E_1)\otimes K^{-1}$, we
may write the Higgs field $\Phi$ as:
$$
\Phi = 
 \left( \begin{array}{c c c}
                        \varphi_{11} & \varphi_{12} & \varphi_{13}\\ 
                        \varphi_{21} & \varphi_{22} & \varphi_{23}\\
                              0      & \varphi_{32} & \varphi_{33}
                       \end{array}
 \right)
$$
with respect to the decomposition \eqref{eq:smooth-decomposition111}.
Moreover, the holomorphic structure on $E$ is of the form
$$
\displaystyle \bar{\partial}_E = \left( \begin{array}{c c c}
                        \bar{\partial}_1 &     \beta_{12}   & \beta_{13}\\ 
                                0        & \bar{\partial}_2 & \beta_{23}\\ 
                                0        &         0        & \bar{\partial}_3  
                       \end{array}
 \right).
$$
Now, for each $z\in\C^*$ take the following constant gauge
transformation:
$$g(z):=\left( \begin{array}{c c c}
                        1 & 0 & 0\\ 
                        0 & z & 0\\
                        0 & 0 & z^2
               \end{array}
             \right)
$$
of $\mathcal{E}$ with respect to the decomposition
\eqref{eq:smooth-decomposition111}.  Then
\begin{displaymath}
  \begin{split}
    g(z)\cdot (z\cdot \Phi)&=g(z)^{-1}(z\cdot \Phi)g(z) \\
    &=
    \left( \begin{array}{c c c}
             z\cdot \varphi_{11} & z^2\cdot \varphi_{12} & z^3\cdot \varphi_{13}\\
             \varphi_{21}       & z  \cdot \varphi_{22} & z^2\cdot \varphi_{23}\\
             0 & \varphi_{32} & z \cdot \varphi_{33}
           \end{array}
         \right) \longrightarrow \left( \begin{array}{c c c}
                                          0      &       0      & 0\\
                                          \varphi_{21} &       0      & 0\\
                                          0 & \varphi_{32} & 0
                                        \end{array} 
                                      \right)\ \text{when}\ z\to0                                
\end{split}
\end{displaymath}
and
\begin{displaymath}
  \begin{split}
    g(z)\cdot \bar{\partial}_{E} &=
    g(z)^{-1}\circ\bar{\partial}_{E}\circ g(z) \\
    &=
    \left( \begin{array}{c c c}
             \bar{\partial}_1 & z \cdot \beta_{12}   & z^2\cdot \beta_{13}\\
             0        &  \bar{\partial}_2    &  z \cdot \beta_{23}\\
             0 & 0 & \bar{\partial}_3
           \end{array}
         \right) \longrightarrow \left( \begin{array}{c c c}
                                          \bar{\partial}_1 &         0        & 0\\
                                          0        & \bar{\partial}_2 & 0\\
                                          0 & 0 & \bar{\partial}_3
                                        \end{array}
                                      \right)\ \text{when}\ z \to0.
                                    \end{split}
\end{displaymath}
Hence, in the configuration space, $\lim_{z\to 0}z\cdot(E,\Phi)$ is
gauge equivalent to
\begin{displaymath}
(E_0,\Phi_0) = \Big(E_1\oplus I \oplus
      (E/E_1)/I, \left( \begin{array}{c c c}
                            0        & 0 & 0\\ 
                        \varphi_{21} & 0 & 0\\
                            0        & \varphi_{32} & 0
                       \end{array}
                     \right) \Big).
\end{displaymath}
Now we prove that $(E_0,\Phi_0)$ is a semistable Higgs bundle.
The $\Phi_0$-invariant subbundles of $E_0$ are the following:
\begin{enumerate}
\item \emph{The subbundle $I \oplus (E/E_1)/I \subset E_0$}. We have
  that $\mu(I \oplus (E/E_1)/I )<\mu(E) \iff \mu(E_1)>\mu(E)$, which
  holds by properties of the Harder--Narasimhan filtration.
\item \emph{The subbundle $(E/E_1)/I \subset E_0$}. The condition
  $\mu((E/E_1)/I)\leq\mu(E)$ is equivalent to
  $-\frac{1}{3}\mu_1+\frac{2}{3}\mu_2+\frac{2}{3}\mu_3\leq\mu(I)$ which
  holds by assumption.
\end{enumerate}

Consider the situation when
$-\frac{1}{3}\mu_1+\frac{2}{3}\mu_2+\frac{2}{3}\mu_3=\mu(I)$; this is
the only case in which
$(E_0,\Phi_0)$ is strictly semistable. Then $Q=(E/E_1)/I$ is a
$\Phi$-invariant subbundle with $\mu(Q) = \mu$, and it follows that the
polystable representative of the $S$-equivalence class of $(E_0,\Phi_0)$
is obtained by setting $\varphi_{32}=0$ in $\Phi_0$.  This leads
to the description given in Case~(1.2).

It remains to analyze the Harder--Narasimhan type of $E_0$ when
$-\frac{1}{3}\mu_1+\frac{2}{3}\mu_2+\frac{2}{3}\mu_3\neq\mu(I)$.
There are two situations to consider.

The first situation is when $\mu(I) \leq \mu(Q)$. Then
the Harder--Narasimhan type of $E_0$ is
$\HNT(E_0)=(\mu(E_1),\mu(Q),\mu(I))$.  Hence, using Shatz's theorem
\cite[Theorem~3]{sha} that the Harder--Narasimhan polygon rises under
specialization, we conclude that $\mu(I) \leq \mu(E/E_2)$. This leads
to the description given in Case~(1.3).

The second situation is when $\mu(I) > \mu(Q)$. Then the
Harder--Narasimhan type of $E_0$ is
$\HNT(E_0)=(\mu(E_1),\mu(I),\mu(Q))$. Hence, from Shatz's theorem we
deduce that $\mu(I) \geq \mu(E_2/E_1)$. But $I \subset E/E_1$ so, from
the properties of the Harder--Narasimhan filtration, we conclude that
in fact $\mu(I)=\mu_2$. If $\mu_3=\mu_2$ it follows that
$\mu(I)=\mu(Q)$, contradicting $\mu(I) > \mu(Q)$. Hence
$\mu_3<\mu_2$ and $I \subset E/E_1$ is the unique maximal
destabilizing subbundle, i.e., $I=E_2/E_1$ and so Case~(1.4)
occurs.

This completes the proof of Case~(1).

\begin{Rmk}
  \label{rem:rank2-proof}
  The arguments given for Case A above apply word for word to prove
  Proposition \ref{prop:limit-rank2}, except that the argument to show
  that $(E_0,\Phi_0)$ is a semistable Higgs bundle is simpler: indeed,
  in the rank 2 case, the only $\Phi$-invariant subbundle of $E_0$ is
  $E/E_1$. This satisfies $\mu(E/E_1) < \mu(E) = \mu(E_0)$ because the
  subbundle $E_1$ is destabilizing, i.e., $\mu(E_1)>\mu(E)$.
\end{Rmk}

\subsubsection{Proof of Theorem~\ref{stratifications} -- Case (2)}
\label{sec:strat-21}

Suppose that $\mu_2>\mu$. Then, since $\mu_3<\mu$, we must have $\mu_1
\geq \mu_2 > \mu_3$. It follows from (2) of
Proposition~\ref{prop:mu-I-N} that $N \subset E_2$ is a line bundle
and that $\mu_1+\mu_2-\mu_3-(2g-2) \leq \mu(N) \leq \mu_1$.

We consider two separate cases.

\paragraph{Case C: $\mu_1+\mu_2-\mu_3-(2g-2) \leq \mu(N)<\mu$.}
\mbox{}

We have a short exact sequence $0 \rightarrow E_2 \rightarrow E
\rightarrow E/E_2 \rightarrow 0.$ Let $\mathcal{E}$, $\mathcal{E}_2$
and $\mathcal{E}_3$ be the $C^\infty$ vector bundles underlying $E$,
$E_2$ and $E/E_2$, respectively.
Then
\begin{math}
  \mathcal{E} \cong {\mathcal{E}}_2 \oplus \mathcal{E}_3
\end{math}
and the holomorphic structure on $\mathcal{E}$ is given by a
$\dbar$-operator of the form
$
\bar{\partial}_E = \left( 
  \begin{smallmatrix}
    \bar{\partial}_2 & \beta \\
    0 & \bar{\partial}_3
  \end{smallmatrix}
\right)
$, while the
Higgs field $\Phi\in A^{1,0}(\End(\mathcal{E}))$ takes the form:
$
\Phi = \left( 
  \begin{smallmatrix}
    \phi_{22} & \phi_{23}\\
    \phi_{32} & \phi_{33}
  \end{smallmatrix}
\right) $.  The same calculation as in Case~A shows that in the
configuration space of all Higgs bundles, $\lim_{z\to
  0}z\cdot(E,\Phi)$ is gauge equivalent to
\begin{displaymath}
  (E_0,\Phi_0) = \Bigl(E_2\oplus\ E/E_2,
  \begin{pmatrix}
    0 & 0\\
    \phi_{32} & 0
  \end{pmatrix}
\Bigr).
\end{displaymath}
This Higgs bundle will represent the limit in the moduli space
$\mathcal{M}(3,d)$ if it is stable. There are three kinds of
$\Phi_0$-invariant subbundles to check:

\begin{enumerate}
\item \emph{The subbundle $N \subset E_2\oplus E/E_2$.}  By
  hypothesis $\mu(N)<\mu=\mu(E)=\mu(E_0)$.
\item 
  \emph{The subbundle $E/E_2\subset E_2\oplus E/E_2$.}
  It is immediate from the properties of the Harder--Narasimhan
  filtration that $\mu(E/E_2)<\mu(E)=\mu(E_0)$.
\item \emph{Subbundles $L\oplus E/E_2 \subset E_2\oplus E/E_2$ for $L
    \subset E_2$ a line subbundle.}  From the properties of the
  Harder--Narasimhan filtration we have that either $E_1 \subset
  E_2$ is maximal destabilizing (if $\mu_1>\mu_2$) or $E_2$ is
  semistable (if $\mu_1=\mu_2$). Either way we have that
  $\mu(L)\leq\mu_1$. It
  follows that
  \begin{align*}
    2\mu(L\oplus E/E_2) &=\mu(L) + 3\mu -\mu_1-\mu_2 \\
    &\leq 3\mu-\mu_2 \\
    &<2\mu,
  \end{align*}
  where we have used the hypothesis $\mu_2>\mu$ in the last
  step. Hence $\mu(L\oplus E/E_2)<\mu=\mu(E)=\mu(E_0)$ as desired.
\end{enumerate}

Finally note that, clearly, $\Gr(E_0) = E_1\oplus E_2/E_1\oplus E/E_2
=\Gr(E)$. Altogether we have seen that, under the given conditions on
the slope of $I$, the limiting bundle $(E_0,\Phi_0)$ is as stated in
Case~(2.1) of the theorem.

\paragraph{Case D: $\mu\leq \mu(N)\leq\mu_1$.}
\mbox{}

Define $R=E_2/ N$ so that we have a short exact sequence
$0 \rightarrow N \rightarrow E_2 \rightarrow R \rightarrow 0.$ Let
$\mathcal{N}$, $\mathcal{R}$ and $\mathcal{E}_3$ be the $C^\infty$
bundles underlying $N$, $R$ and $E/E_2$, respectively, so that
we have a decomposition of $C^\infty$-bundles 
\begin{equation}
  \label{eq:smooth-decomposition111b}
  \mathcal{E} = \mathcal{N} \oplus \mathcal{R} \oplus \mathcal{E}_3.
\end{equation}
Recalling that $\mathcal{N}$ comes from $\ker(\phi_{21})$, we may write the Higgs field 
$\Phi$ as:
$$\Phi = 
 \left( \begin{array}{c c c}
                        \varphi_{11} & \varphi_{12} & \varphi_{13}\\ 
                        \varphi_{21} & \varphi_{22} & \varphi_{23}\\
                              0      & \varphi_{32} & \varphi_{33}
                       \end{array}
 \right)$$
with respect to the decomposition \eqref{eq:smooth-decomposition111b}.
Moreover, the holomorphic structure on $E$ is of the form
$$\displaystyle \bar{\partial}_E = \left( \begin{array}{c c c}
                        \bar{\partial}_1 &     \beta_{12}   & \beta_{13}\\ 
                                0        & \bar{\partial}_2 & \beta_{23}\\ 
                                0        &         0        & \bar{\partial}_3  
                       \end{array}
 \right).$$
Now take the constant gauge transformation
$g(z)=\left( \begin{smallmatrix}
                        1 & 0 & 0\\ 
                        0 & z & 0\\
                        0 & 0 & z^2
               \end{smallmatrix}
             \right)
$
of $\mathcal{E}$ with respect to the decomposition \eqref{eq:smooth-decomposition111b}.     
The same calculation as in Case~B shows that in the configuration space  $\lim_{z\to 0}z\cdot(E,\Phi)$ is gauge equivalent to
\begin{displaymath}
(E_0,\Phi_0) = \Big(N \oplus E_2/N \oplus E/E_2, 
\left( \begin{array}{c c c}
                            0        & 0 & 0\\ 
                        \varphi_{21} & 0 & 0\\
                            0        & \varphi_{32} & 0
                       \end{array}
                     \right) \Big).
\end{displaymath}
We now prove that $(E_0,\Phi_0)$ is a semistable Higgs bundle.  The
$\Phi_0$-invariant subbundles of $E_0$ are the following:
\begin{enumerate}
\item \emph{The subbundle $E/E_2\subset E_0$}. From the properties of the Harder--Narasimhan filtration we have  $\mu(E/E_2)<\mu(E)=\mu(E_0)$.
\item \emph{The subbundle $E_2/N \oplus E/E_2 \subset E_0$}. The
  hypothesis $\mu(N)\geq\mu$ is equivalent to $\mu(E_2/N \oplus E/E_2)\leq\mu=\mu(E)=\mu(E_0)$.
\end{enumerate}

Consider the situation when $\mu(N) = \mu$; this is the only case in
which $(E_0,\Phi_0)$ is strictly semistable. Then
$E_2/N \oplus E/E_2 \subset E_0$ is a $\Phi$-invariant subbundle of
slope $\mu(E_2/N \oplus E/E_2 \subset E_0) = \mu$, and it follows that
the polystable representative of the $S$-equivalence class of
$(E_0,\Phi_0)$ is obtained by setting $\varphi_{21}=0$ in $\Phi_0$.
This leads to the description given in Case~(2.2).

It remains to analyze the Harder--Narasimhan type of $E_0$ when
$\mu(N) \neq \mu$. For brevity we write $R=E_2/N$. There are two
situations to consider.

The first situation is when $\mu(N) \leq \mu(R)$. Then the
Harder--Narasimhan type of $E_0$ is
$\HNT(E_0)=(\mu(R),\mu(N),\mu_3)$.  Hence, once again using Shatz's
theorem, we conclude that $\mu(N) \leq \mu_2$. This leads to the description given in Case~(2.3).

The second situation is when $\mu(N) > \mu(R)$. Then the
Harder--Narasimhan type of $E_0$ is
$\HNT(E_0)=(\mu(N),\mu(R),\mu_3)$. Hence, from Shatz's theorem we
deduce that $\mu(N) \geq \mu_1$. But $N \subset E_2$ so, from
the properties of the Harder--Narasimhan filtration, we conclude that
in fact $\mu(N)=\mu_1$. If $\mu_2=\mu_1$ it follows that
$\mu(N)=\mu(R)$, contradicting $\mu(N) > \mu(R)$. Hence
$\mu_2<\mu_1$ and so $N \subset E_2$ is the unique maximal
destabilizing subbundle, i.e., $N=E_1$ and Case~(2.4)
occurs.

\subsubsection{Proof of Theorem~\ref{stratifications} -- Case (3)}
\label{sec:strat-3}

Suppose that $\mu_2 = \mu$. Then, since $E$ is unstable, we must have
$\mu_1 > \mu_2 > \mu_3$. It follows from Proposition~\ref{prop:mu-I-N}
that the subbundles $I \subset E/E_1$ and $N\subset E_2$ are line bundles.

Consider the line bundle $N \subset E_2$. If $N\neq E_1$ we have a
non-zero map
\begin{displaymath}
  N \to E_2/E_1.
\end{displaymath}
It follows that $\mu(N)\leq \mu(E_2/E_1) = \mu_2 = \mu$. Arguing as in
Case~C above, we see that in the
configuration space of all Higgs bundles, $\lim_{z\to
  0}z\cdot(E,\Phi)$ is gauge equivalent to
\begin{displaymath}
  (E_0,\Phi_0) = \Bigl(E_2\oplus\ E/E_2,
  \begin{pmatrix}
    0 & 0\\
    \phi_{32} & 0
  \end{pmatrix}
\Bigr)
\end{displaymath}
and that this is strictly semistable. Moreover, the subbundle
$E_1\oplus E/E_2$ is $\Phi$-invariant and has slope $\mu(E_1\oplus
E/E_2) = \mu_2 = \mu$. Hence the polystable representative of the
$S$-equivalence class of $(E_0,\Phi_0)$ is as stated in Case~(3.2).

Now suppose that $N = E_1$. In this case we can argue as in Case~D
above and see that the limit is as stated in  Case~(3.1).

In an analogous manner, we see that if $I\neq E_2/E_1$ the polystable
representative of  the
$S$-equivalence class of $(E_0,\Phi_0)$ is as stated in Case~(3.2),
while if $I = E_2/E_1$ the limit is as stated in  Case~(3.1).

Since the Cases (3.1) and (3.2) are mutually exclusive, we see that in
fact the conditions $N = E_1$ and $I = E_2/E_1$ are equivalent. This completes the proof of Case~(3) and thus the proof of Theorem~\ref{stratifications}.


\end{document}